\documentclass[11pt]{article}

\usepackage[margin=1in]{geometry}
\usepackage{amsmath,amssymb,amsthm}
\usepackage{mathtools}
\usepackage{graphicx}
\usepackage{microtype}
\usepackage{float}
\usepackage{algorithm}
\usepackage{algpseudocode}
\usepackage{placeins}
\usepackage{fvextra}
\usepackage{tikz}
\usetikzlibrary{positioning,arrows.meta}
\usepackage[hidelinks]{hyperref}
\usepackage[backend=biber,style=alphabetic,maxbibnames=99]{biblatex}
\theoremstyle{plain}
\newtheorem{theorem}{Theorem}[section]
\newtheorem{lemma}[theorem]{Lemma}

\theoremstyle{definition}
\newtheorem{definition}[theorem]{Definition}
\newtheorem{example}[theorem]{Example}

\theoremstyle{remark}

\title{\textbf{Another Brick in the Wall: A Computationally Assisted Proof of an Improved Upper Bound for Thrackles}}
\author{Truman Scoppetto}
\date{October 2026}

\begin{document}

\maketitle

\begin{abstract}
Let $G$ be a thrackle, where a thrackle is a graph drawing in which each edge meets every other edge exactly once, either at a vertex or at a crossing. It is conjectured that, for all thrackles, the number of edges is less than or equal to the number of vertices. Fulek and Pach created a program to determine whether or not a graph can be a thrackle. Through a reconstruction and improvement of their algorithm, we computationally prove a key dumbbell to be non-thrackleable. Using this exclusion, we improve Xu’s 8-face half-edge bound from five to four. Finally, using this stronger restriction with Xu's framework, together with separate arguments for graphs containing triangles and disconnected graphs, we improve the known upper bound toward this conjecture to $m\le1.385(n-1)$ for $n\ge4$.
\end{abstract}

%\tableofcontents

\section{Introduction}
When first looking at thrackles, one may notice that, in their most basic forms, the number of edges always remains less than or equal to the number of vertices. From this observation, John Conway posed the thrackle conjecture in the late 1960s~\cite{MiserehNikolayevsky2018Annular}, suggesting that a thrackle always has at least as many vertices as it has edges. A lay reader may understand only the words \emph{vertices} and \emph{edges}, thinking of how they compose shapes, but when those words are combined in this way with the elusive word ``thrackle,'' confusion may take hold. The problem, though, is quite simple to understand once one knows what a thrackle is. So, to fill in that blank, a thrackle is defined as a graph drawing---a construct composed of abstract nodes or points called vertices and segments that connect them called edges---in which every edge meets every other edge no more and no less than once. After understanding that, this problem becomes quite simple to understand.

Not so easy to understand, however, is how to prove or disprove this conjecture. Since the creation of this problem, many attempts have been made to solve it or get closer to an answer, and some of those attempts have produced significant updates to the upper bound. Prominent contributors have included Lov\'asz, Pach, Szegedy, Cairns, Nikolayevsky, Fulek, Goddyn, and Xu~\cite{LovaszPachSzegedy1997,CairnsNikolayevsky2000,FulekPach2011,GoddynXu2017,FulekPach2019,Xu2021}. Each has used different structural and computational strategies to advance both the understanding of this problem and the known bounds. Despite these successful attempts to get closer to an answer and the problem's long history, it remains open for anyone to try.

We will explore the approaches of some of these pioneers and how they obtained their improved bounds. Specifically, we will look at Xu's 2021 improvement and Fulek and Pach's 2011 improvement. We are looking at these two specifically because they both use unique strategies, ranging from structural to computational methods, that allow the problem to be seen in different lights and are directly used in the proof of our new bound. Additionally, we will go over my own exploration and improved bound, with the same coefficient that Fulek and Pach had previously observed conditionally~\cite[Remark~2]{FulekPach2019}, as well as my reconstruction of Fulek and Pach's thrackleability-testing program, which was not available to us.

As the reader, you are expected to have only a basic familiarity with graph theory; other than that, specialized concepts will be explained, and some basic algebra will be used. We will start with prerequisite information about concepts pertinent to this problem, such as basic definitions of a graph, foundational lemmas, and backbone equations. We will then move on to a deeper explanation of the problem, looking at why exactly it is hard and, in more depth, what it actually is. Following that, we will go over the previous attempts of Xu and of Fulek and Pach, as mentioned previously, with their discharging and computational/inequality methods, respectively. Finally, we will finish with my own exploration, which includes my attempts at a better bound, my replication and additions to Fulek and Pach's program that was not available to us, and future work.

We are currently working on developing this into a finalized, publishable paper without as much expository content and is more formally composed.

\begin{theorem}\label{thm:main-bound}
    A thrackleable graph \(G\) with \(n\) vertices and \(m\) edges satisfies
    \[
    m\le
\begin{cases}
n, & 1\le n\le3,\\
\frac{18}{13}(n-1), & n\ge4.
\end{cases}
\]
\end{theorem}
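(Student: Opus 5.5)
The plan has two parts: the small cases by hand, and the case $n\ge4$ by Xu's discharging framework strengthened by the computational exclusion of one dumbbell.

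\textbf{Small cases.} For $1\le n\le3$, a simple graph on at most three vertices is a subgraph of $K_3$, so it has at most $n$ edges except for $n=3$ with $m=3$. That extremal case is the triangle, which is thrackleable and gives equality. So $m\le n$ holds there.

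\textbf{Reductions for $n\ge4$.} Since $\tfrac{18}{13}(n-1)\ge n$ exactly when $n\ge 18/5$, the bound is at least as weak as the conjecture for $n\ge4$. I will prove it by induction on $n$ with three reductions.

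First, the disconnected case. By the known fact that a thrackle drawing of a disconnected graph forces all but one component to be trivial, or by splitting components, each component $C_i$ has $m_i\le \tfrac{18}{13}(n_i-1)$ or $m_i\le n_i$ when $n_i\le3$. I then check that the sum stays below $\tfrac{18}{13}(n-1)$. This needs care, since small components can have $m_i=n_i$. The standard fact is that a thrackle contains at most one odd cycle, and two vertex-disjoint cycles cannot both be thrackled unless both are odd; together these bound the number of components with $m_i=n_i$ to one. The loss of $\tfrac{18}{13}$ per extra component then absorbs it.

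Second, graphs containing a triangle. Here I use Lemma-type facts: a thrackle containing $C_3$ (or any odd cycle) cannot contain certain even-cycle configurations. Concretely, I would delete the triangle's structure, or contract it, and apply induction. Alternatively, I can use that such thrackles are outerplanar-like after Cairns–Nikolayevsky's transformation, giving a direct count.

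Third, the main case: connected and triangle-free. Here I follow Xu's framework. A bipartite thrackle (after Lovász–Pach–Szegedy / Cairns–Nikolayevsky) admits a planar embedding in which each face's boundary satisfies restrictions. The non-bipartite triangle-free case reduces to bipartite via the standard odd-cycle splitting, which loses only an additive constant.

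\textbf{Discharging.} Xu assigns charges to faces and half-edges. His bound rests on a lemma limiting how many ``bad'' half-edges an 8-face can carry, previously five. Using the computationally established non-thrackleability of the key dumbbell (obtained with the reconstructed Fulek–Pach program), I improve this limit to four. I then rerun Xu's linear program or discharging inequalities with the parameter $4$ in place of $5$. I expect the resulting inequality to be $13m\le 18(n-1)$, i.e.\ the coefficient $18/13\approx1.385$ that Fulek and Pach noted conditionally.

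\textbf{Main obstacle.} I expect the hard step to be verifying that every place in Xu's argument using the $5$ bound re-balances correctly with $4$, so that no other face type (6-faces, or 10+-faces) becomes the new bottleneck. The way to handle it is to write the discharging as an explicit linear system over face sizes and half-edge types. I would then check that the minimum achievable ratio is exactly $18/13$, with the extremal configuration tied to 8-faces. A secondary difficulty is making the additive constant come out as $-1$, i.e.\ $(n-1)$, which requires tracking the Euler formula constant and the reductions carefully for small $n$.
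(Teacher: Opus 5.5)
Your discharging core matches the paper's Case 1. The $DB(6,8,-2)$ exclusion gives $h(F)\le4$ for 8-faces, and the constraints $2r-6+6x\ge0$ (6-faces) and $3r-8-4x\ge0$ (8-faces) force $r\ge\frac{36}{13}$. The $10^+$-faces keep slack, so the bottleneck you worry about does not arise. The genuine gaps are in the two places where the additive constant is decided.

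\textbf{Triangle case.} You give no working mechanism for graphs containing a triangle, and the ones you float fail. Contraction does not preserve thrackleability: contracting an edge of the thrackleable $C_5$ gives $C_4$. Deleting the triangle's edges saves no vertices. Deleting its vertices forces you to pay for an uncontrolled number of incident edges against an allowance of only $\frac{54}{13}$. The missing idea is Conway doubling: replace the $C_3$ by a $C_6$, obtaining a connected bipartite thrackleable $H$ with $N=n+3$ and $M=m+3$. You then need a bipartite bound that is \emph{stronger} than the target. In the plane ($k=2$) the charge identity has total $-r$, so each 2-connected bipartite block $B\ne C_6$ satisfies $|E(B)|\le\frac{18}{13}(|V(B)|-2)$. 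Building $H$ block by block and tracking the deficits of $K_2$ and $C_6$ blocks gives $M\le\frac{18}{13}(N-2)+\frac{1}{13}$, hence $m\le\frac{18n-20}{13}$. The $(N-2)$ is essential: with $(N-1)$ the doubling would overshoot the target by $\frac{15}{13}$.

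\textbf{Non-bipartite triangle-free case.} The same issue breaks this step. ``Loses only an additive constant'' is not acceptable when the constant is part of the statement. For instance, Conway-doubling a shortest odd cycle (length $L\ge5$ here) adds $L$ vertices and $L$ edges. Even the sharp plane bound $M\le\frac{18}{13}(N-2)$ then yields only $m\le\frac{18}{13}(n-1)+\frac{5L-18}{13}$, which exceeds $\frac{18}{13}(n-1)$. The paper does not reduce to the bipartite case here. It runs the same discharging directly on the projective-plane parity embedding (Xu's Lemma~1.1), where Euler characteristic $1$ makes the total charge $-(k-1)r=0$ and delivers exactly $2m\le r(n-1)$.

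\textbf{Lesser points.} First, you never reduce to the 2-connected case. A minimal counterexample has no leaf and no cut vertex, since pieces glued at a cut vertex satisfy $\sum_i(n_i-1)=n-1$. Yet 2-connectivity is what makes face boundaries cycles of length at least six. Together with ``two distinct 6-cycles share no vertex'' and $G\ne C_6$, it also forces every 6-face to border only $8^+$-faces. Second, in the disconnected case the facts you cite are wrong or unneeded. $2K_2$ is a thrackle, and the known statement is that a thrackle has no two vertex-disjoint \emph{odd} cycles. All that is needed is that only a $C_3$ component exceeds $\frac{18}{13}(n_i-1)$, by $\frac{3}{13}$, and $3t\le3j\le18(j-1)$ for $j\ge2$ components.
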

\section{Preliminaries}
\subsection{Basic Graph Theory Preliminaries}
A graph $G=(V,E)$ consists of a vertex set $V$ and an edge set $E$. The set $V$ is an abstract set of vertices (points or nodes), such as
\[
V=\{a,b,c\},
\] 
and $E$ is an abstract set of edges (vertex connectors), such as
\[
E=\bigl\{\{a,b\},\{b,c\},\{a,c\}\bigr\}.
\] 
A drawing of a graph is a visual representation that can have open crossings, meaning the touching of edges that do not share a vertex~\cite{FulekPach2011}, and consists of points and segments connecting them, as shown in Figure~\ref{fig:big-dipper}.
\begin{figure}[H]
    \centering
    \begin{tikzpicture}[scale=1.2,
        every node/.style={circle, fill=black, inner sep=2pt}]

        % Bowl
        \node[fill=red] (A) at (0,0) {};
        \node[fill=blue] (B) at (2,0.3) {};
        \node[fill=red] (C) at (2.4,1.8) {};
        \node[fill=blue] (D) at (0.4,1.5) {};

        % Handle
        \node[fill=blue] (E) at (3.6,2.3) {};
        \node[fill=red] (F) at (4.8,2.1) {};
        \node[fill=blue] (G) at (6.0,2.8) {};

        % Bowl edges
        \draw (A)--(B);
        \draw (B)--(C);
        \draw (C)--(D);
        \draw (D)--(A);

        % Handle edges
        \draw (C)--(E);
        \draw (E)--(F);
        \draw (F)--(G);

    \end{tikzpicture}

    \caption{A two-colored bipartite Big Dipper graph consisting of a $C_4$ cycle with a path attached to one vertex.}
    \label{fig:big-dipper}
\end{figure}

A graph $G$ is considered connected if there is a path between every pair of vertices. A graph is considered 2-connected if it has at least three vertices and remains connected after the removal of any one vertex. Equivalently, a 2-connected graph has no cut vertex, where a cut vertex is a vertex whose removal disconnects the graph. Throughout this paper, we define $|V(G)|$ as $n$ and $|E(G)|$ as $m$, and all graphs are finite and simple: they have finitely many vertices and edges, no loops, and at most one edge between any pair of vertices. 
\begin{definition}
    A cycle is a closed walk in which the first and last vertices are the same and no other vertices are repeated. This is denoted by $C_k$, where $k$ is the number of vertices in the cycle; for example, $C_4$ has a cycle length of four vertices.
\end{definition}
\begin{definition}
    A bipartite graph is one that contains no odd cycles ($C_3, C_5, C_7,\ldots$) and is two-colorable; that is, the vertices can be divided into two sets such that edges connect only vertices from different sets.
\end{definition}
\begin{definition}
    A plane embedding is a drawing of a graph in the plane with no open crossings. In an embedding, the plane is divided into connected regions called faces, and the boundary of a face is described by a boundary walk. A boundary walk is the closed walk formed by starting at a point on the boundary of a face and following the edges of the face until returning to the start.
\end{definition}
\begin{definition}
 The projective plane can be modeled by taking a disk and treating opposite points on the boundary as the same. A cellular embedding (also called a 2-cell embedding, meaning one composed of solid regions with no holes) in the projective plane is a representation of certain graphs in a more complex geometry that does not have a distinguished outer face.
\end{definition}

Euler's formula is an enduring relationship among faces, edges, vertices, and a constant called Euler's characteristic in graph theory. For a connected plane graph, Euler's formula is
\[
V-E+F=2,
\]
where $V$ is the number of vertices, $E$ is the number of edges, and $F$ counts the number of faces, including the outer face. For a cellular embedding of a connected graph in the projective plane, Euler's formula has an Euler characteristic of 1 and is instead
\[
V-E+F=1.
\]
Here, every face is counted, but unlike in the plane scenario, none of them is a designated outer face. This equation is one of the most important tools in graph theory. In our notation, the planar equation is
\[
n-m+f=2,
\]
and the projective-plane equation is $n-m+f=1$; these equations use the same quantities with different notation.

\begin{definition}
        In a graph $G$, a minor $H$ can be obtained by removing edges, removing vertices, or contracting edges. Contraction is the act of taking an edge with two endpoints, $e=uv$, and combining them into one vertex while keeping all edges incident to $u$ or $v$ attached to the new vertex. An important fact about minors is that if $H$ is a minor of $G$ and is nonplanar, then $G$ is nonplanar:
        \[
        H\preccurlyeq G \text{ and } H\text{ nonplanar}
        \quad\Longrightarrow\quad
        G\text{ nonplanar}.
        \]
\end{definition}
\subsection{General Thrackle Preliminaries}
A thrackle is a graph drawing in which every pair of edges meets exactly once, either at a common endpoint or at an interior proper crossing between nonincident edges which we call an open crossing. We will use the term open crossing rather than interior proper crossing throughout this paper. Important things to note about a thrackle drawing are that an edge cannot pass through a vertex that is not its endpoint and that no three edge interiors can pass through the same point. A graph is thrackleable if it admits a thrackle drawing. A thrackle drawing is demonstrated in Figure~\ref{fig:starcrossing}, with an open crossing marked as a white circle.
\begin{figure}[H]
    \centering
    \begin{tikzpicture}[scale=1.1, every node/.style={circle, fill=black, inner sep=1.5pt}]
        
        % Outer star points
        \node (A) at (90:1.6) {};
        \node (B) at (18:1.6) {};
        \node (C) at (-54:1.6) {};
        \node (D) at (-126:1.6) {};
        \node (E) at (162:1.6) {};
        
        % Star edges
        \draw (A) -- (C);
        \draw (C) -- (E);
        \draw (E) -- (B);
        \draw (B) -- (D);
        \draw (D) -- (A);
        
        % Open crossing point
        \draw[fill=white] (0.35,0.5) circle (0.08);
        
        % Arrow pointing to the open crossing
        \draw[->, thick] (2.2,1.1) -- (0.41,0.51);
        
        % Optional label
        \node[draw=none, fill=none, right] at (2.25,1.1) {open crossing};
    \end{tikzpicture}
    \caption{A star-shaped $C_5$ cycle thrackle with an indicated open crossing.}
    \label{fig:starcrossing}
\end{figure}
Additionally, thrackleability is hereditary with respect to subgraphs:
    \[
    H\subseteq G,\quad G\text{ thrackleable}
    \quad\Longrightarrow\quad
    H\text{ thrackleable}.
\]
Separately, a thrackleable graph cannot contain $C_4$ as a subgraph because $C_4$ is non-thrackleable~\cite{FulekPach2011}. Thus, as thrackleability is hereditary, any graph containing a $C_4$ is non-thrackleable.

\begin{definition}
    A dumbbell belongs to a family of graphs composed of two cycles. Dumbbells are denoted in the form
    \[
DB(c',c'',\ell),
\]
where $c'$ and $c''$ are positive integers greater than two representing the lengths of the two cycles, and $\ell$ is an integer satisfying $\ell>-\min(c',c'')$ that describes the method of connection between the cycles. When $\ell<0$, the cycles intersect exactly in a common path of $-\ell$ consecutive edges; when $\ell=0$, the cycles share exactly one vertex; and when $\ell>0$, the cycles are vertex-disjoint and have a path of $\ell$ edges connecting a vertex on one cycle to a vertex on the other, where none of the interior vertices on the path lie on either cycle~\cite{FulekPach2011}. Additionally, in a dumbbell,
\[
n=c'+c''+\ell-1,
\]
and
\[
m=c'+c''+\ell,
\]
and therefore
\[
m=n+1.
\]
This is illustrated in Figure~\ref{fig:db331-db330-db33m1}.
\end{definition}
\begin{figure}[H]
\centering
\begin{tikzpicture}[scale=1.1, every node/.style={circle, fill=black, inner sep=1.6pt}]

    % ---------------- DB(3,3,1) ----------------
    \begin{scope}[xshift=0cm]
        % left triangle
        \node (A1) at (0,1) {};
        \node (B1) at (-0.8,0) {};
        \node (C1) at (0.8,0) {};
        \draw (A1)--(B1)--(C1)--(A1);

        % right triangle
        \node (D1) at (3,1) {};
        \node (E1) at (2.2,0) {};
        \node (F1) at (3.8,0) {};
        \draw (D1)--(E1)--(F1)--(D1);

        % connecting path of length 1
        \draw (C1)--(E1);

        % label
        \node[draw=none, fill=none] at (1.5,-0.8) {$DB(3,3,1)$};
    \end{scope}

    % ---------------- DB(3,3,0) ----------------
    \begin{scope}[xshift=6cm]
        % shared vertex
        \node (V) at (0,0) {};

        % left triangle
        \node (A2) at (-1,1) {};
        \node (B2) at (-1,-1) {};
        \draw (V)--(A2)--(B2)--(V);

        % right triangle
        \node (C2) at (1,1) {};
        \node (D2) at (1,-1) {};
        \draw (V)--(C2)--(D2)--(V);

        % label
        \node[draw=none, fill=none] at (0,-1.8) {$DB(3,3,0)$};
    \end{scope}

    % ---------------- DB(3,3,-1) ----------------
    \begin{scope}[xshift=11cm]
        % shared edge
        \node (A3) at (0,1) {};
        \node (B3) at (0,-1) {};
        \node (C3) at (-1.3,0) {};
        \node (D3) at (1.3,0) {};

        % two triangles sharing one edge
        \draw (A3)--(C3)--(B3)--(A3);
        \draw (A3)--(D3)--(B3);

        % label
        \node[draw=none, fill=none] at (0,-1.8) {$DB(3,3,-1)$};
    \end{scope}

\end{tikzpicture}
\caption{The dumbbell graphs $DB(3,3,1)$, $DB(3,3,0)$, and $DB(3,3,-1)$.}
\label{fig:db331-db330-db33m1}
\end{figure}
\section{Known Results/Major Approaches}
\subsection{Xu's 2021 Proof}
Xu achieved the most recent bound of $m\le1.393(n-1)$. This was done mainly through the discharging method and his innovative new discharging rule. While it is easy just to mention the existence of discharging and present the proof, an understanding of the discharging method is crucial to fully comprehending Xu's proof. Thus, we will describe discharging briefly before continuing.

Discharging is a proof method used in graph theory that takes inspiration from the way charges work in physics. These charges are allotted to all faces and vertices of a graph and are determined by a charge formula. When discharging, one must know the initial charge before continuing. A common way to do this is to design discharging equations around Euler's formula, as Xu does. After establishing the charges, one creates a discharging rule. The discharging rule describes how charges will be moved and reallocated within a graph. The final goal of discharging is for the rules to result in local configurations whose charges force a contradiction with the known global charge. We will provide some quick facts followed by an example to illustrate this method before presenting Xu's proof.
\begin{definition}
    A vertex degree is denoted by $d(v)$, where $d(v)$ is equal to the number of edges incident to a vertex $v$. For example, if $v$ is incident to six edges, then $d(v)=6$. Likewise, a face degree is denoted by $d(f)$, where $d(f)$ is equal to the length of a boundary walk on $f$, counted with multiplicity.
\end{definition}
\begin{lemma}
The sum of all vertex degrees is
\[
\sum_{v \in V} d(v) = 2|E|.
\]
Likewise, the sum of all face degrees is
\[
\sum_{f \in F} d(f) = 2|E|.
\]
This is called the Handshaking Lemma.
\end{lemma}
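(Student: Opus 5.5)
We prove the Handshaking Lemma by double counting incidences, once for vertices and once for faces.

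\emph{Vertex sum.} Consider the set of pairs $(v,e)$ with $v\in V$, $e\in E$, and $v$ an endpoint of $e$. Counting by vertices, each $v$ lies in exactly $d(v)$ such pairs, so the number of pairs is $\sum_{v\in V} d(v)$. Counting by edges, each edge $e=uv$ has exactly two endpoints, since the graphs are simple and loopless. So each edge contributes exactly two pairs, giving $2|E|$. Equating the two counts gives $\sum_{v\in V} d(v)=2|E|$.

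\emph{Face sum.} Here we work with an embedding, planar or projective as in the preceding definitions, and count edge-sides. Each edge $e$ has two sides, and each side lies on the boundary walk of exactly one face. Recall that $d(f)$ is the length of the boundary walk of $f$ counted with multiplicity. So each traversal of an edge in a boundary walk corresponds to exactly one side of that edge. The number of edge-sides is therefore $\sum_{f\in F} d(f)$ when counted by faces, and $2|E|$ when counted by edges.

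The main subtlety is an edge that has the same face on both sides, such as a bridge. Such an edge is traversed twice by that face's boundary walk, so it contributes $2$ to $d(f)$ for that single face. This is exactly why face degree is defined with multiplicity. With that convention, the correspondence between sides and boundary-walk traversals is a bijection, and the identity $\sum_{f\in F} d(f)=2|E|$ holds.

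For cellular embeddings in the projective plane, the same local two-sidedness holds near each edge, so the argument is unchanged.
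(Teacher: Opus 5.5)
Your proposal is correct and takes essentially the same approach as the paper: double counting, with two endpoints per edge for the vertex sum and two sides per edge for the face sum. Your handling of bridges through the with-multiplicity convention, and your remark on local two-sidedness in the projective plane, make explicit what the paper's shorter argument leaves implicit.
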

\begin{proof}
Each edge $e=uv$ has two vertex endpoints. Because a degree counts all incident edges, each endpoint of each edge counts it once. Thus, each edge is counted twice, resulting in
\[
\sum_{v \in V} d(v) = 2|E|.
\]
For faces, the idea is the same. Every edge has two sides, and each side contributes one boundary incidence, one on each side. Because each face counts all edges around it, each edge is counted twice, resulting in
\[
\sum_{f \in F} d(f) = 2|E|.
\]
\end{proof}
\begin{example}
    We start with the claim that a connected plane graph with minimum degree at least 3 has a face of degree at most 5. We now begin the proof.
    \begin{proof}
        For a contradiction, assume the opposite:
        \[
d(v) \ge 3 \qquad \text{for every vertex } v
\]
and
\[
d(f) \ge 6 \qquad \text{for every face } f.
\]
        We also assign the charges to be
        \[
\operatorname{ch}_0(v) = d(v) - 4,
\qquad
\operatorname{ch}_0(f) = d(f) - 4.
\]
This is useful because we can immediately see that the total charge is
\[
\begin{aligned}
\operatorname{ch}
&=
\sum_{v \in V} \operatorname{ch}_0(v)
+
\sum_{f \in F} \operatorname{ch}_0(f)
\\
&=
2m - 4|V| + 2m - 4|F|
\\
&=
-4\bigl(|V| + |F| - m\bigr)
\\
&=
-8.
\end{aligned}
\]
Finally, we adopt our discharging rule: every face sends $\frac{1}{3}$ unit of charge to each occurrence of a vertex on its boundary walk.
Now, a degree-3 vertex begins with
\[
3 - 4 = -1.
\]
After following the discharging rule, though, it receives $\frac{1}{3}$ unit of charge for each of its incident corners:
\[
3\left(\frac{1}{3}\right)=1,
\]
meaning that its charge becomes
\[
3\left(\frac{1}{3}\right)-1=0.
\]
Next, degree-$4^+$ vertices already start with a charge of at least
\[
d(v)-4\ge0,
\]
since 
\[
4 - 4 = 0.
\]
Finally, faces have a starting charge of
\[
d(f)-4,
\]
which, after discharging $\frac{1}{3}$ through each boundary incidence, becomes
\[
\frac{2d(f)}{3}-4,
\]
and, since $d(f)\ge6$,
\[
\frac{2d(f)}{3} - 4 \ge 0.
\]
Thus, because all faces and vertices have nonnegative final charge,
\[
\sum \operatorname{ch}^*\ge0.
\]
However, we only moved charge instead of creating it, and
\[
\sum \operatorname{ch}=-8,
\]
which is a contradiction. Thus, $d(f)\le5$ for some face $f$.

    \end{proof}
\end{example}
Now we can move on to explaining Xu's paper.

\begin{figure}[htbp]
\centering
\begin{tikzpicture}[scale=0.85]

% Central 8-face
\draw
(0,2) -- (1,2) -- (2,1) -- (2,0) --
(1,-1) -- (0,-1) -- (-1,0) -- (-1,1) -- cycle;

% Upper-left 6-face
% Shares TWO edges of the 8-face:
% (-1,1)--(0,2) and (0,2)--(1,2)
\draw
(-1,1) -- (-1.5,2) -- (-0.5,3) -- (1,3) -- (1,2);

% Lower-left 6-face
% Shares TWO edges of the 8-face:
% (-1,0)--(0,-1) and (0,-1)--(1,-1)
\draw
(-1,0) -- (-1.5,-1) -- (-0.5,-2) -- (1,-2) -- (1,-1);

% Right 6-face
% Shares ONE edge of the 8-face:
% (2,1)--(2,0)
\draw
(2,1) -- (2.5,1.5) -- (3,1) -- (3,0)
-- (2.5,-0.5) -- (2,0);

% Vertices
\foreach \x/\y in {
0/2, 1/2, 2/1, 2/0, 1/-1, 0/-1, -1/0, -1/1,
-1.5/2, -0.5/3, 1/3,
-1.5/-1, -0.5/-2, 1/-2,
2.5/1.5, 3/1, 3/0, 2.5/-0.5
}{
    \fill (\x,\y) circle (2pt);
}

\end{tikzpicture}
\caption{An \(8\)-face with five half-edges, adapted from Xu \cite{Xu2021}.}
\label{fig:xu-five-half-edges}
\end{figure}
\begin{definition}
    A half-edge is an edge shared by both a 6-face and an $8^+$-face. For a face $F$, the number of half-edges incident to $F$ is denoted by $h(F)$~\cite{Xu2021}.
\end{definition}
\begin{lemma}[Xu, Lemma~1.1~\cite{Xu2021}]
    Let $G$ be a connected thrackleable graph. If $G$ is bipartite, it has a plane embedding; if it is nonbipartite, it has a 2-cell embedding in the projective plane in which every face is even.
\end{lemma}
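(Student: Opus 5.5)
The statement is Xu's Lemma~1.1, a structural fact we will reprove from two classical results about thrackles. The first is Lov\'asz--Pach--Szegedy's observation that in any thrackle drawing of a bipartite graph one can redraw edges so that all crossings disappear, giving a planar embedding. The second is Cairns--Nikolayevsky's extension to the nonbipartite case, where the drawing is converted into an embedding in the projective plane~\cite{LovaszPachSzegedy1997,CairnsNikolayevsky2000}. The plan is to treat the two cases separately, using the same local redrawing move, and then verify the cellularity and face-parity claims via the thrackle's odd-cycle structure.

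\emph{Bipartite case.} Fix a thrackle drawing of $G$ and a two-colouring $V=A\cup B$. The key step is the ``Conway doubling''/reflection trick. For each vertex $v\in A$, perform a local surgery in a small disk around $v$ that reverses the rotation of the edges at $v$. Then slide each edge's initial segment near $v$ so that the crossings it had with other edges are traded for shared-endpoint incidences. Concretely, one uses the fact that each pair of edges meets exactly once. For two edges $e=ab$ and $f=a'b'$ with $a,a'\in A$, the single crossing can be pushed along $e$ to $a$ and then around $a$ by the reversal. The result is a drawing in which any two nonincident edges cross an even number of times and incident edges cross an even number of times away from their common vertex. By the Hanani--Tutte theorem, such a drawing certifies that $G$ is planar, which gives the plane embedding.

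\emph{Nonbipartite case.} Here the reflection cannot be applied consistently, because an odd cycle obstructs a global two-colouring. Instead, we will show that in a thrackle every pair of vertex-disjoint odd cycles crosses an odd number of times. Parity counts on the single crossings give $c'c''$ crossings, which is odd. We then apply the projective-plane analogue of Hanani--Tutte, using the $\mathbb{Z}_2$-homology class that assigns to each cycle its length parity: after the same local surgery at one colour class of a spanning tree, the remaining independently odd crossings are exactly those forced by a crosscap. This yields an embedding in the projective plane in which a cycle is one-sided if and only if it is odd.

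\emph{Cellularity and parity.} Since $G$ contains an odd cycle, which is one-sided, the embedding cannot lie in a disk. Hence each face is an open disk once $G$ is connected; otherwise some face would contain a noncontractible curve disjoint from $G$, but every one-sided curve meets each one-sided cycle of $G$. For parity, the boundary walk of each face is contractible, i.e.\ two-sided, and two-sided closed walks have even length by the correspondence above, so every face is even.

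I expect the main obstacle to be the second step. Making the projective Hanani--Tutte argument rigorous, and tracking exactly which crossings survive, is delicate. If that fails, I would simply cite~\cite{CairnsNikolayevsky2000} for the existence of the parity embedding.
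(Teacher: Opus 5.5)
The paper gives no argument for this lemma. It quotes Xu, who relies on Lov\'asz--Pach--Szegedy for the bipartite case and on Cairns--Nikolayevsky for the projective parity embedding. So your fallback of citing those results is exactly the paper's route.

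As a self-contained reproof, however, your sketch does not work as written. In the bipartite case, a surgery inside a small disk around $v$ touches only edges incident to $v$. Reversing rotations can therefore change crossing parities only for incident pairs; it can never remove the odd crossing of an independent pair. The move you actually describe, pushing the crossing of $e=ab$ and $f=a'b'$ along $e$ and past $a$, is a finger move of $f$ over $a$. It flips the parity of $f$ with \emph{every} edge at $a$, not only with $e$. Done pair by pair it is not well defined: the rule is symmetric in $e$ and $f$, performing both pushes cancels, and repeating the push of $f$ over $a$ for each edge at $a$ undoes it. You need a consistent global choice. For example, order the vertices of $A$ and push each edge over every vertex of $A$ that comes after its own $A$-endpoint. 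Then each independent pair $ab,a'b'$ changes parity exactly once, and the strong Hanani--Tutte theorem, which only asks independent pairs to cross evenly, gives planarity. Your claim about incident pairs is then unnecessary. Under this rule, pairs sharing a $B$-vertex become odd, so a rotation-reversing twist would belong at $B$, not $A$.

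The nonbipartite step is the real gap. That two disjoint odd cycles cross $c'c''$ times is a necessary consequence of a parity embedding, not a construction of one. Likewise, ``the same local surgery at one colour class of a spanning tree'' does not define a drawing whose odd crossings are ``forced by a crosscap.'' The standard construction adds a crosscap and reroutes \emph{every} edge once through it. Away from the crosscap, the two parallel strands of each detour cross everything an even number of times. Any two detours are one-sided loops in $\mathbb{RP}^2$, so they cross an odd number of times. Hence independent pairs become even, while incident pairs become odd; a local reflection at every vertex repairs the latter. Every edge now carries the nontrivial $\mathbb{Z}_2$-class, so a cycle is one-sided if and only if it is odd. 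You then need a Hanani--Tutte theorem on $\mathbb{RP}^2$ that preserves the embedding scheme, and hence the sidedness of every cycle; this is precisely the Cairns--Nikolayevsky lemma. A version yielding merely some embedding in $\mathbb{RP}^2$ could lose the parity property, on which both your cellularity and even-face arguments depend. Once a parity embedding exists, those two arguments are fine. For cellularity, add one case: a two-sided loop in a face bounds a disk in $\mathbb{RP}^2$ containing part of $G$, while the odd cycle lies outside it, contradicting connectivity.
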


This is important when it comes to how Xu represents his graph in the proof: not as a thrackle drawing, but as a plane or projective-plane embedding, as appropriate.

\begin{lemma}[Xu, Lemma~2.1~\cite{Xu2021}]
    There are at most five half-edges on the boundary of each 8-face.
\end{lemma}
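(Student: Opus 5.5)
We argue by contradiction: suppose some 8-face $F$ has all eight boundary edges as half-edges, that is, $h(F)\ge 6$, and show that the embedding then contains a non-thrackleable subgraph. The setting is the one given by Lemma~1.1 of Xu. $G$ is a connected thrackleable graph, embedded in the plane (bipartite case) or cellularly in the projective plane with all faces even (nonbipartite case). Every face has degree at least $6$, since $C_4$ is non-thrackleable and all faces are even. Each half-edge on $F$ is shared with a 6-face.

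The first step is to analyze how these 6-faces sit around $F$. Two consecutive half-edges on $\partial F$ share a vertex $v$. If they belong to different 6-faces $A$ and $B$, then $A$, $B$ and $F$ all meet at $v$. If they belong to the same 6-face $A$, then $A$ and $F$ share a path of length at least two. In that case the union of $\partial A$ and $\partial F$ contains a dumbbell $DB(6,8,-k)$ with $k\ge 2$, or a shorter cycle formed by the non-shared portions. Such a short cycle has length at most $(6-k)+(8-k)$ and must have length at least $6$, which bounds $k$.

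The second step is a pigeonhole count. With at least six of the eight boundary edges being half-edges, either some 6-face shares at least three edges of $F$, or three consecutive boundary edges lie on distinct 6-faces. In the first case I would exhibit a $C_4$ or a cycle of length at most $4$: for example, a 6-face sharing three consecutive edges with the 8-face leaves complementary paths of lengths $3$ and $5$, giving an 8-cycle, and the forbidden configuration comes from the neighbouring faces instead. In the second case I would use the two adjacent 6-faces $A$ and $B$ around a vertex $v$, together with $F$, to find two 6-cycles $\partial A$ and $\partial B$ meeting in exactly one vertex or one edge. Combined with a third 6-face, these force the configuration drawn in Figure~\ref{fig:xu-five-half-edges} to be overfull.

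The final step turns the resulting local structure into an excluded subgraph. I would invoke the Fulek--Pach classification of non-thrackleable dumbbells~\cite{FulekPach2011}, for instance $DB(6,6,-1)$- or $DB(6,8,\ell)$-type configurations. The other route is parity and length constraints: an odd cycle in the projective case must be non-contractible, and two disjoint non-contractible cycles cannot coexist in the projective plane.

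The main obstacle is this last step. One must show that every arrangement of six half-edges produces a configuration already known to be non-thrackleable, and the bookkeeping of planar versus projective cases and of shared-vertex coincidences is delicate. I would first try to reduce everything to the case of two 6-faces meeting $F$ along adjacent edges, which yields a dumbbell with a path of length at most one between the hexagons. I would then cite the corresponding non-thrackleability result, falling back on a direct case check if a needed dumbbell is not covered.
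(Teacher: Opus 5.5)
\textbf{There is a genuine gap: your sketch never reaches a contradiction, and its case split is wrong.} The paper gives no argument of its own here; it cites Goddyn--Xu for $h(F)\le 6$ and Xu for ruling out $h(F)=6$. A self-contained proof is a legitimate alternative route. (Your first sentence also conflates ``all eight edges are half-edges'' with $h(F)\ge 6$.)

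\textbf{The dichotomy in your second step does not follow from pigeonhole.} If $e_1,e_2$ lie on a 6-face $A$, $e_3,e_4$ on $B$, $e_6,e_7$ on $C$, and $e_5,e_8$ are not half-edges, neither of your alternatives holds. The split that works is whether two \emph{consecutive} half-edges ever lie on distinct 6-faces. If they do, those 6-faces share a vertex, and the Fulek--Pach lemma (two distinct 6-cycles cannot share a vertex) forbids this outright. You then need neither the unproved claim that the hexagons meet in exactly one vertex or one edge, nor a third face, nor an ``overfull'' picture. If they never do, each maximal run of consecutive half-edges is owned by one 6-face. Since at most two non-half-edges split $\partial F$ into at most two runs, some 6-face $A$ owns a longest run, of $k\ge 3$ edges.

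\textbf{This is where your first case stalls.} You look for a $C_4$, find only the harmless complementary 8-cycle, and defer to unspecified ``neighbouring faces''. The missing observation is that $\partial A\cup\partial F$ is itself forbidden. Label $F=v_1\cdots v_8$ with $e_i=v_iv_{i+1}$ and the run starting at $e_1$.
\begin{itemize}
\item For $k\ge 6$, the run spans at least seven vertices, too many for a 6-cycle.
\item For $k=5$, the sixth edge of $A$ is the chord $v_6v_1$, giving the $C_4$ $v_1v_6v_7v_8$. This is your bound $14-2k\ge 6$.
\item For $k\in\{3,4\}$, which occurs only in the patterns $3+3$ and $4+2$, the union is $DB(6,8,-k)$. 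This is excluded by the Fulek--Pach verification for bipartite graphs on at most ten vertices.
\end{itemize}
For the union to be exactly this dumbbell, the remaining vertices of $A$ must avoid $F$. In these two patterns every vertex of $F$ lies on a half-edge, so a vertex off $A$'s run lies on the other run's 6-face. That face differs from $A$, since otherwise $\partial A$ would consist of the six half-edges, which form two vertex-disjoint paths rather than a cycle. The shared-vertex lemma then applies again.

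\textbf{There is no ``classification'' of non-thrackleable dumbbells to fall back on, only specific exclusions.} $DB(6,6,1)$, which your fallback with a connecting path of length at most one would need, is not among them. Neither is $DB(6,8,-2)$, which is the paper's new computation. So the argument must be arranged, as above, to use only $C_4$, $DB(6,8,-3)$, $DB(6,8,-4)$ and the shared-vertex lemma. The remark about non-contractible cycles in the projective plane plays no role.

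\textbf{Work in the 2-connected triangle-free setting of Fulek--Pach's Proposition~1.} That is what makes the 6- and 8-face boundaries genuine $C_6$'s and $C_8$'s. Your claim that every face has degree at least six because $C_4$ is non-thrackleable is valid only once face boundaries are known to be cycles.
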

\begin{proof}
    Previous work had established that there are at most six half-edges on the boundary of each 8-face~\cite[Lemma~2.5]{GoddynXu2017}. Xu eliminates every six-half-edge scenario, establishing that there cannot be a thrackleable scenario with six half-edges on an $8$-face. Thus, there are at most five half-edges on an 8-face.
\end{proof}
\begin{theorem}[Xu, Theorem~1.1~\cite{Xu2021}]
     A thrackleable graph of order $n$ has at most $1.393(n-1)$ edges.
\end{theorem}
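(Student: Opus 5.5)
This theorem is Xu's, so the plan is to reconstruct his discharging argument, taking the 8-face bound of five from Lemma~2.1 as given.

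\emph{Reduction to the structured case.} First I would reduce to a connected thrackleable graph $G$ that contains no $C_4$ and whose every vertex has degree at least $2$. Deleting a vertex of degree at most $1$ lowers $m$ by at most $1$ and $n$ by exactly $1$. Since $1.393>1$, a minimal counterexample has minimum degree at least $2$. A disconnected graph can be handled componentwise, using the known results on disconnected thrackles, or by noting that the inequality is superadditive in the $(n-1)$ form once each component obeys it.

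\emph{Setting up the embedding and charges.} By Xu's Lemma~1.1, $G$ has a plane embedding when bipartite, and otherwise a projective-plane 2-cell embedding with all faces even. Because there is no $C_4$ and all faces are even, every face has degree at least $6$. Euler's formula gives $n-m+f\ge 1$ in both cases. I would then assign charges $\mathrm{ch}_0(f)=d(f)-6$ to faces and $\mathrm{ch}_0(v)$ to vertices, chosen so that the Euler relation makes the total charge proportional to $m-c(n-1)$ with a negative sign.

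\emph{Discharging.} The purely counting bound is easy. Using $2m=\sum d(f)\ge 6f$ in Euler's formula gives $m\le\tfrac32(n-1)$. To improve the constant to $1.393$, one has to show that $6$-faces cannot be too dense. The key structural inputs are the following:
\begin{itemize}
\item two $6$-faces sharing edges, and $6$-faces forming certain dumbbells (such as $DB(6,6,\cdot)$ configurations), are not thrackleable;
\item hence each $6$-face borders many $8^+$-faces, and the edges it shares with them are half-edges.
\end{itemize}
The discharging rule sends charge from each $8^+$-face to each adjacent $6$-face across every half-edge. An $8$-face has charge $2$ and, by Lemma~2.1, at most five half-edges. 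Larger faces have charge $d(f)-6$, which grows linearly and comfortably covers their half-edges. Choosing the transfer per half-edge to be the largest value an $8$-face can afford with five half-edges, and verifying that every $6$-face receives enough to cancel its deficit, yields an inequality of the form $f\le\alpha m$ with explicit $\alpha$. Substituting this into $n-m+f\ge 1$ gives $m\le\frac1{1-\alpha}(n-1)$. Optimizing the transfer constants should give $1/(1-\alpha)\approx1.393$.

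\emph{Main obstacle.} The hard step is the local lower bound on how many half-edges each $6$-face has. This requires the non-thrackleability of specific small configurations: Goddyn--Xu's lemmas on $6$-faces sharing a path, excluded via the $C_4$-freeness and dumbbell exclusions. I would take these from Goddyn--Xu as black boxes and concentrate on balancing the linear program of transfer constants.
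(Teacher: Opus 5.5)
\textbf{There is a genuine gap: the charge setup cannot beat $\frac32$, and the reduction misses the cases the key lemmas need.} Your overall strategy is the paper's (Xu's): $8^+$-faces send charge across half-edges to $6$-faces, and the five-half-edge bound on $8$-faces is the binding constraint. But the charges as you state them cannot produce any improvement. With $\mathrm{ch}_0(f)=d(f)-6$, a $6$-face has charge $0$, so it has no deficit to cancel. Moreover, moving charge between faces preserves $\sum_f \mathrm{ch}_0(f)=2m-6f$. So whatever rule you use, nonnegative final face charges only give back $f\le m/3$, which is your trivial $\frac32(n-1)$.

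The threshold must be a parameter $\beta>6$. This is exactly what Xu's $t(f)=\frac{r-2}{2}d(f)-r$ encodes, since it is a positive multiple of $d(f)-\frac{2r}{r-2}$. Take charge $d(f)-\beta$ and transfer $x$ per half-edge:
\begin{itemize}
\item a $6$-face needs $6x\ge\beta-6$;
\item an $8$-face needs $5x\le 8-\beta$;
\item eliminating $x$ gives $\beta\le 78/11$ with $x=2/11$, and faces of degree $d\ge 10$ keep at least $(9d-78)/11>0$;
\item hence $f\le 2m/\beta=\frac{11}{39}m$, and Euler gives $m\le\frac{39}{28}(n-1)<1.393(n-1)$.
\end{itemize}
This is the paper's computation with five half-edges in place of four, giving $r\ge 39/14$. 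Once the lemmas are granted, it is essentially the whole proof, so ``optimizing should give $1.393$'' leaves the actual argument undone.

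\textbf{The six-half-edge fact and the missing reductions.} The constraint $6x\ge\beta-6$ uses that \emph{all six} edges of every $6$-face are half-edges, not merely ``many''; with only five guaranteed you would reach $c=7/5$. This is not the hard step you suggest. It follows at once from the fact that distinct $6$-cycles in a thrackle are vertex-disjoint, together with $G\neq C_6$. However, it needs every $6$-face boundary to be a genuine $C_6$, and the $8$-face boundaries to be $C_8$'s as in the setting of Xu's Lemma~2.1. That is Fulek--Pach's Proposition~1, which holds for $2$-connected triangle-free graphs.

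Your reduction stops at minimum degree $2$. You also need to split at cut vertices, which works because $\sum_i(n_i-1)=n-1$, and to treat triangles separately. The triangle case is a real obstruction, not a formality. The statement as written is false for $C_3$, since $3>1.393\cdot 2$. Your argument breaks exactly there: the projective-plane embedding of $C_3$ has a single $6$-face whose boundary walk runs twice around the triangle and borders no $8^+$-face. So the minimal-counterexample argument must exclude $C_3$; the paper uses this bound only for $n\ge 4$. Triangle-containing graphs with $n\ge 4$ also need their own argument; for its own bound, the paper handles them by Conway doubling.
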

\begin{proof}
    Using a five-half-edge bound, Xu constructs a discharging proof. He first defines the charges such that they satisfy 
    \[
    r+\sum_{v\in V} t(v)+\sum_{f\in\mathcal{F}} t(f) = -(k-1)r,
    \]
    where $k$ is the Euler characteristic, with $k=2$ for the plane and $k=1$ for the projective plane. He does this so that we know the total charge, which he finds to be nonpositive. Because the proof is designed in this way, his assumption becomes $m>c(n-1)$, and the total-charge calculation also gives the form
    \[
    m>\frac{r}{2}(n-1),
    \]
    so he sets $r=2c$. Now that this preliminary part is done, he begins to solve for $c$ using the following discharging rule: all $8^+$-faces evenly distribute charge through each of their half-edges to adjacent $6$-faces. He chooses $1.393$ as the value of $c$ to keep the face charge nonnegative and shows that the initial assumption ensures
    \[
    m>\frac r2(n-1)\Rightarrow r+\sum_{v\in V} t(v)>0.
    \]
    Thus, because \(\sum_f t^*(f)\ge0\) and \(r+\sum_v t(v)>0\), the total charge is positive. Discharging preserves total charge, and because the initial total charge was \(-(k-1)r\le0\), the nonpositive initial charge contradicts the positive final charge~\cite[Section~3]{Xu2021}.
\end{proof}
    This bound does not hold for $C_3$, so in this paper it is used only for $n\ge4$.
\subsection{Fulek and Pach's 2011 Proof}
Fulek and Pach's 2011 proof combines both computational and structural approaches to achieve its bound. By establishing the dumbbell and a method for showing non-thrackleability, this paper could be considered a backbone of the most modern approaches to the conjecture. For this reason, while we will go over how they obtained their bound through their structural approach, we will place a larger emphasis on their thrackleability-testing algorithm.

\begin{theorem}[Fulek--Pach, Theorem~2~\cite{FulekPach2011}]
The maximum number of edges in a thrackle on \(n\) vertices satisfies
\[
t(n)\le \frac{167}{117}n<1.428n.
\]
\end{theorem}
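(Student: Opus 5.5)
The plan follows Fulek and Pach's argument: reduce to a statement about bipartite thrackles, and pay for the non-bipartite part with the odd-cycle and dumbbell structure. Write $t(n)$ for the maximum number of edges in a thrackle on $n$ vertices. Two known facts are used. First, every bipartite thrackleable graph is planar; by Xu's Lemma~1.1 it has a plane embedding, and it contains no $C_4$. Second, thrackleable graphs do not contain the dumbbells that Fulek and Pach exclude; in particular, two vertex-disjoint odd cycles joined in the forbidden ways are not thrackleable.

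\textbf{Bipartite case.} A connected bipartite $C_4$-free plane graph has every face of length at least $6$. Euler's formula $n-m+f=2$ together with $6f\le 2m$ gives $m\le \tfrac32(n-2)$. The real work is to improve the coefficient $\tfrac32$ using the Fulek--Pach computer search. The search certifies that certain dumbbells of the form $DB(c',c'',\ell)$ with $c',c''\in\{6,8\}$ are non-thrackleable. This limits how 6-faces can cluster: two 6-face boundaries cannot share a single edge, or be joined by a short path, in the excluded patterns. Consequently many edges must border an $8^+$-face.

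I will turn this into a counting inequality. Let $f_6$ and $f_{8^+}$ be the numbers of 6-faces and $8^+$-faces. Each 6-face has six edges, and by the dumbbell exclusions only a bounded share of those edges can also lie on another 6-face. Double counting then gives an inequality of the form $2m\ge 6f_6+8f_{8^+}+\alpha f_6$, equivalently $2m\ge \beta f$ with $\beta>6$ as large as the excluded configurations allow. Substituting into Euler's formula yields $m\le \tfrac{\beta}{\beta-2}(n-2)$.

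\textbf{Non-bipartite case.} A thrackle contains a bipartite subthrackle obtained by deleting few edges. Fulek and Pach use the fact that a thrackleable graph has essentially one odd-cycle "block": two disjoint odd cycles form a dumbbell that must be thrackleable in a restricted way. Deleting one edge from a suitable odd cycle, or otherwise a bounded correction per component, makes the graph bipartite. Because the bipartite bound carries a $-2$ or $-c$ slack, this costs only an additive constant, which is absorbed into a pure linear bound $t(n)\le cn$.

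\textbf{Combining.} Finally, induct on components and cut vertices: for a disconnected or non-2-connected graph, summing the bounds over blocks only improves the estimate. This gives $t(n)\le \tfrac{167}{117}n$. The coefficient $\tfrac{167}{117}$ should emerge from optimising the weights in the face-counting inequality, where the weights are chosen to exploit exactly which dumbbells the program excludes.

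\textbf{Main obstacle.} The hardest step is the bipartite face-counting inequality. Its coefficient depends entirely on which local configurations of 6-faces the computation rules out. I would first try to take the computer-certified non-thrackleable dumbbells as given. Then I would set up a linear program over the local types of 6-faces (how many neighbours are 6-faces versus $8^+$-faces) and read off the optimal constant.
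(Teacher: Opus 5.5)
\textbf{The non-bipartite step fails.} It is true that a thrackle contains no two vertex-disjoint odd cycles: disjoint cycles of lengths $a$ and $b$ would cross exactly $ab$ times, while two closed curves cross an even number of times. So such a pair never appears at all; it is not a dumbbell that is thrackleable ``in a restricted way''.

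This property does not imply that one edge, or a bounded number of edges per component, meets every odd cycle. The bowtie $DB(3,3,0)$ already needs two deletions. Even-faced graphs in the projective plane have odd cycles that are exactly the one-sided ones, so these pairwise intersect, yet they can need a number of deletions growing with $n$. So the loss is not an additive constant.

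Even a constant loss could not be absorbed into $\frac{167}{117}n$ unless your bipartite coefficient were strictly smaller, which you have not shown. Your block-summing step also needs a bound of the form $\beta(n_i-1)$ for every block, and a lossy reduction does not supply this.

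The tool you need is the one the paper records as Xu's Lemma~1.1 (originally Cairns--Nikolayevsky). A connected non-bipartite thrackleable graph has a cellular embedding in the projective plane with all faces even. The same face count then runs with $n-m+f=1$, and no edges are deleted. You must still ensure that short faces are bounded by genuine $6$-cycles, for example by reducing to $2$-connected triangle-free graphs, since a triangle traversed twice is a facial walk of length $6$.

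\textbf{The coefficient is never derived.} This is the real content of the theorem. In the paper, following Fulek--Pach, the only dumbbell input is the non-thrackleability of $DB(6,6,\ell)$ for $-3\le\ell\le 0$. The case $DB(6,6,-3)$ comes from Lov\'asz--Pach--Szegedy and the other three from the Fulek--Pach search. This amounts to the parameters $c=6$, $\ell=0$ (no two $6$-cycles share a vertex), which are substituted into Fulek and Pach's general inequality to give $\frac{167}{117}$.

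You instead appeal to exclusions with $c',c''\in\{6,8\}$ and connecting paths, which are not part of this input. You then leave the constant to an unspecified linear program. As written, your bipartite case proves only $m\le\frac32(n-2)$.

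Your counting inequality is also mis-set. Since $\sum_f d(f)=2m$ is an identity, $2m\ge 6f_6+8f_{8^+}+\alpha f_6$ would assert that the long faces are longer than $8$, which is not what the exclusions say. What they give is an upper bound on $f_6$. For instance, if no two $6$-faces share an edge, every edge of a $6$-face borders an $8^+$-face, so $12f_6\le 2m$. Combined with $2m\ge 8f-2f_6$ and Euler's formula, this gives $m\le\frac{24}{17}(n-2)$ for $2$-connected bipartite graphs. An explicit inequality of this kind is what your bipartite step must contain, with the face property proved from the $DB(6,6,\ell)$ exclusions and $C_4$-freeness. It then has to be carried through blocks and the projective case.
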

This is the result that Fulek and Pach obtained by excluding the dumbbells $DB(6,6,-2)$, $DB(6,6,-1)$, and $DB(6,6,0)$~\cite[Lemma~6]{FulekPach2011} and using the earlier exclusion of $DB(6,6,-3)$~\cite[Theorem~5.1]{LovaszPachSzegedy1997}. They were thus able to apply $c=6$ and $\ell=0$ in their inequality to obtain the bound.
\begin{definition}
    If $E_e$ is the set of edges nonincident to $e$, then $e_x\in E_e$ if $e_x$ does not share a vertex with edge $e$.
\end{definition}
\begin{definition}
    For each edge $e$, assign an arbitrary direction. The crossing order of edges in $E_e$ along an edge $e$, read in that arbitrary direction, is denoted by $\pi_e$. A complete row $\pi_e$ contains every edge in $E_e$ exactly once, while a partial row contains only the crossings of $e$ fixed by the search.
\end{definition}
\begin{definition}
    The set of all edge-crossing orders is denoted by $\Pi=(\pi_e)_{e\in E(G)}$.
\end{definition}
\begin{definition}
    The number of edges in the set of nonincident edges is denoted by $m(e)=|E_e|$.
\end{definition}
These crossing-order definitions follow Fulek and Pach's definitions~\cite[Section~4]{FulekPach2011}.

To test thrackleability, their algorithm takes an abstract graph $G=(V,E)$, with $V$ as the set of vertices and $E$ as the set of edges. They next construct an ordering $\pi_e$ of all nonincident edges according to where they would cross a given edge $e$, repeating this for each edge until they have a set of these orderings, $\Pi=\{\pi_e\mid e\in E(G)\}$. Following this, they replace each abstract open crossing with a vertex and subdivide the two edges belonging to the open crossing at the site of the new vertex. After this happens, if a segment of an original edge directly connects two crossing vertices, then an additional vertex is added on this segment, subdividing it once again. For the last step here, they construct a diamond shape around each open-crossing vertex so that the vertices of the diamond are the four adjacent vertices on the original edges, forming a new graph $G'$. This diamond serves as a sort of lock when running a planarity program, ensuring that the local ordering of edges around the open-crossing vertex remains the same. To see why this is important, suppose that, when walking clockwise around a crossing and recording the order in which the edges appear, we obtain $e_1e_2e_3e_4$, where $e_1$ and $e_3$ originally belonged to the same edge before it was split into two, and $e_2$ and $e_4$ originally belonged to the other edge.
\begin{figure}[H]
\centering
\begin{tikzpicture}[scale=1.2,
    vertex/.style={circle, fill=black, inner sep=1.5pt},
    every node/.style={font=\small}
]

% Center

\node[vertex] (O) at (0,0) {};

% Outer endpoints of plus sign
\coordinate (N) at (0,2);
\coordinate (E) at (2,0);
\coordinate (S) at (0,-2);
\coordinate (W) at (-2,0);

% Inner vertices on each arm
\node[vertex] (n) at (0,1) {};
\node[vertex] (e) at (1,0) {};
\node[vertex] (s) at (0,-1) {};
\node[vertex] (w) at (-1,0) {};

% Draw plus sign edges
\draw[thick] (N)--(n)--(O)--(s)--(S);
\draw[thick] (W)--(w)--(O)--(e)--(E);

% Diamond around the center
\draw[thick] (n)--(e)--(s)--(w)--(n);

% Labels for the enclosed edge-segments
\node at (0.2,0.45) {$e_1$};   % north
\node at (0.45,-0.15) {$e_2$};  % east
\node at (-0.2,-0.45) {$e_3$}; % south
\node at (-0.45,0.15) {$e_4$};  % west

\end{tikzpicture}
\caption{An open crossing vertex surrounded by its diamond configuration lock.}
\end{figure}
For a crossing to be an open crossing, its edge components must be opposite one another. If the planarity test were run without the diamond lock, however, the clockwise edge ordering might look like $e_1e_4e_2e_3$. This would place components belonging to the same original edge next to each other, so the vertex would not represent an open crossing. The graph $G$ is thrackleable if and only if there exists a complete $\Pi$ such that $G'(G,\Pi)$ is planar. Thus, after constructing a complete $G'$, a standard planarity test is run to see whether it is planar. If it is, then the graph $G$ is known to have a thrackleable configuration under the ordered-crossing system $\Pi$. This is because when $G'$ is made planar, it has no open crossings, but the open crossings of $G$ are already represented by vertices locked in place. Changing these vertices back into open crossings makes them the only open crossings in the graph, meaning that it is thrackleable~\cite[Section~4]{FulekPach2011}.

Fulek and Pach also construct a backtracking algorithm to find the proper $\Pi$. At a high level, the ordered-crossing sets begin empty. Additionally, for a partial crossing-order system $\Pi''$, let \(G'_{\mathrm{pre}}(G,\Pi'')\) denote its augmented graph constructed along a fixed Euler traversal. This graph contains only portions of the active edge that have been traced. Thus, if the current edge is unfinished, the segment from its last recorded crossing to its terminal vertex is omitted. This also means that, for every crossing vertex, only components of the locking configuration supported by existing arms are present. Finally, for every complete \(\Pi\) extending \(\Pi''\), the graph \(G'_{\mathrm{pre}}(G,\Pi'')\) is a minor of \(G'(G,\Pi)\). Therefore, if \(G'_{\mathrm{pre}}(G,\Pi'')\) is nonplanar, every completion is nonplanar; if it is planar, the branch has simply not yet been ruled out.

The program first checks whether every ordered open-crossing set in $\Pi$ is filled with every edge nonincident to the edge to which the set belongs, returning true if this is the case. This is because being filled means that the configuration that reached this point must have a complete $\Pi$ that successfully passed the planarity test, so it is thrackleable. If this check returns false, the program then checks whether $e=-1$. If it does, the program picks a new edge to work on because $-1$ means that the previous edge being worked on has filled out its ordered crossings and there is now no current edge. This is safe because the most recently tested \(G'_{\mathrm{pre}}(G,\Pi'')\) is planar, so the branch has not yet been ruled out. The next section of the code can set $e=-1$: it checks whether an edge has filled out its ordered crossings and, if so, calls the function recursively with $e=-1$.

If an edge has not filled out its ordered crossings, the program cycles through each nonincident edge $e'\in E_e$ that the current edge $e$ has yet to cross. In this process, the program tries to place the current edge $e$ in every possible position in the ordered-crossing set of each currently observed nonincident edge $e'$, while the current nonincident edge $e'$ is appended to the end of the ordered-crossing set of the current edge $e$. Each resulting order and insertion choice receives a planarity check. If \(G'_{\mathrm{pre}}(G,\Pi'')\) is planar, the program calls itself recursively. If that call returns false, then, although the current configuration worked, all deeper permutations with additional edges failed, so the decision about where the current edge and observed nonincident edge are placed in the ordered-crossing sets is removed. If \(G'_{\mathrm{pre}}(G,\Pi'')\) is nonplanar instead, then the decision about where the edges are placed in the ordered-crossing sets is removed, and the program cycles to a new crossing-order configuration for $e$ and $e'$ or selects a new nonincident edge $e'$ whose crossing-order permutations will be checked. This is because, for a partial set of edge-crossing orders $\Pi''$, when a planarity test finds \(G'_{\mathrm{pre}}(G,\Pi'')\) to be nonplanar, the crossing-order sets that could result from adding an edge to this set are pruned, so a new crossing-order choice is needed to continue. Finally, if none of the positions for any of the nonincident edges work, then this backtrack fails and the program returns to the preceding backtracking step. The number of possible complete ordered-crossing systems is given by
\[
\prod_{e\in E(G)} m(e)!.
\]
The actual pseudocode is as follows:
\begin{figure}[H]
\centering
\begin{minipage}{0.82\textwidth}

\begin{tabbing}
\hspace{1em}\=\hspace{1em}\=\hspace{1em}\=
\hspace{1em}\=\hspace{1em}\=\kill

$\textsc{Backtrack}(e,\Pi'):$\\

\> if $\Pi'$ is complete\\
\>\> return True\\

\> if $e=-1$\\
\>\> $e\leftarrow\textsc{PickNextEdge}(G, \Pi')$\\[1mm]

\> if all required crossings of $e$ have been inserted\\
\>\> return $\textsc{Backtrack}(-1,\Pi')$\\[1mm]

\> for each nonincident edge $e'$ not yet crossed by $e$\\
\>\> for each possible position $p$ in $\pi_{e'}$\\
\>\>\> $\Pi''\leftarrow
        \textsc{Update}(\Pi',e,e',p)$\\

\>\>\> if \(G'_{\mathrm{pre}}(G,\Pi'')\) is planar\\
\>\>\>\> if $\Pi''$ is complete\\
\>\>\>\>\> return \textsc{True}\\
\>\>\>\> if $\textsc{Backtrack}(e,\Pi'')=\textsc{True}$\\
\>\>\>\>\> return \textsc{True}\\[1mm]

\> return \textsc{False}

\end{tabbing}

\end{minipage}
\caption{High-level reformulation of the Fulek--Pach backtracking
algorithm~\cite[Appendix~A]{FulekPach2011}.}
\label{alg:fulek-pach}
\end{figure}
\begin{lemma}
    The dumbbell graphs $DB(6,8,\ell)$ are non-thrackleable for $-5\le\ell\le-3$.
\end{lemma}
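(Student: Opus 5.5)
The lemma covers three graphs, $DB(6,8,-3)$, $DB(6,8,-4)$ and $DB(6,8,-5)$. I plan to treat them differently. For $\ell=-5$ there is a purely structural argument: a $6$-cycle and an $8$-cycle sharing a path of $5$ consecutive edges have union equal to the theta graph formed by two internally disjoint paths between the same endpoints, of lengths $1$ and $3$. The edge of length $1$ and the path of length $3$ together form a $C_4$. Since $C_4$ is non-thrackleable and thrackleability is hereditary, $DB(6,8,-5)$ is non-thrackleable.

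For $\ell=-3$ and $\ell=-4$ the union is a theta graph with paths of lengths $(3,3,5)$ and $(4,2,4)$ respectively. The first contains only cycles of lengths $6,8,8$ and the second only cycles of lengths $6,6,8$, so no $C_4$ appears and the shortcut fails. These are all even cycles, so the graph is bipartite; Lemma~1.1 of Xu only yields a plane embedding and gives no contradiction either. For these two graphs I would use the computational method: run the reconstructed Fulek--Pach backtracking search of Figure~\ref{alg:fulek-pach} and certify that every branch dies.

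\textbf{Setup.} Fix the graph, orient each edge arbitrarily, and compute $E_e$ and $m(e)$ for each edge. There are $m=n+1=12$ edges in the case $\ell=-3$ (with $n=11$), and $11$ edges in the case $\ell=-4$ (with $n=10$), so each $m(e)$ is at most about $9$. The naive space $\prod_e m(e)!$ is astronomically large. Correctness therefore rests on the pruning principle stated above: for any partial system $\Pi''$, the graph $G'_{\mathrm{pre}}(G,\Pi'')$ is a minor of $G'(G,\Pi)$ for every completion $\Pi$. Hence if $G'_{\mathrm{pre}}$ is nonplanar, the whole subtree can be discarded.

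\textbf{Search.} I would fix an Euler traversal order starting inside the shared path, so that the most constrained edges are processed first. I would also exploit symmetry to cut branches:
\begin{itemize}
\item the reflection of the theta graph,
\item reversing the orientation of each edge's row, which acts as a $\mathbb{Z}_2$ symmetry per edge on the first insertion,
\item mirror images of drawings.
\end{itemize}
I would then run the search to exhaustion and record that \textsc{Backtrack} returns \textsc{False} from the root. Since $G$ is thrackleable if and only if some complete $\Pi$ makes $G'(G,\Pi)$ planar, exhaustion proves non-thrackleability. To gain trust in the implementation, I would first validate it on known cases. It should return \textsc{False} for $C_4$ and for $DB(6,6,-3)$ through $DB(6,6,0)$, and \textsc{True} for $C_5$, $C_6$ and known thrackleable dumbbells such as $DB(3,3,0)$.

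\textbf{Main obstacle.} The hard step is making the exhaustive search both finish and be trustworthy for these two cases. The plan for both problems is as follows.
\begin{itemize}
\item \emph{Rigor of pruning.} The planarity test must be exact; I would use a standard linear-time routine such as Boyer--Myrvold via a library. The partial augmented graph must include only the diamond-lock edges supported by existing arms, since otherwise the minor claim fails.
\item \emph{Running time.} Better edge-ordering heuristics and symmetry reduction should keep the search tractable. Logging node counts and any nonplanarity certificates (Kuratowski subgraphs) at the leaves would make the computation independently checkable.
\end{itemize}
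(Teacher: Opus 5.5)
Your $\ell=-5$ case is correct and more elementary than the paper's: $DB(6,8,-5)$ is the theta graph with paths of lengths $5,1,3$, and the paths of lengths $1$ and $3$ close a $C_4$.

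\textbf{The gap is in the cases $\ell=-3,-4$.} There you describe a search and then assume its result (``record that \textsc{Backtrack} returns \textsc{False}''). An exhaustive, audited run would be a legitimate proof, but until it exists these two cases are unproved in your write-up.

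The idea you are missing is that no new computation is needed. With $n=13+\ell$, the three dumbbells have $n=8,9,10$. Your counts are off by one: $\ell=-3$ gives $n=10$, $m=11$, and $\ell=-4$ gives $n=9$, $m=10$. All three graphs are bipartite and satisfy $m=n+1>n$. They therefore fall within Fulek and Pach's completed computer verification of Conway's conjecture for bipartite graphs on at most $10$ vertices. That observation is the whole of the paper's proof.

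For $\ell=-4$ you could even have stayed structural. The theta graph with paths $(4,2,4)$ is exactly $DB(6,6,-2)$: the two $6$-cycles you found share the path of length $2$. Fulek and Pach already excluded $DB(6,6,-2)$ in their Lemma~6.

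\textbf{Parts of your computational plan would also go wrong as stated.}

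\emph{The sanity check.} You expect \textsc{True} on ``known thrackleable dumbbells such as $DB(3,3,0)$''. But every dumbbell has $m=n+1$, so a thrackleable dumbbell would disprove Conway's conjecture, as the paper itself notes. No such dumbbell is known, so this check would reward a wrong implementation.

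\emph{Reversing a single row.} Reversing one row $\pi_e$ while orientations are fixed is not a symmetry, because it changes $G'(G,\Pi)$. For $C_6$, independent row reversals act freely on the $6^6$ systems, with orbits of size $2^6=64$. Yet the paper finds only $8$ planar $C_6$ systems, so the planar set is not closed under these reversals. Pruning by this ``symmetry'' could discard the only surviving branch.

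\emph{Mirror images.} A mirror image of a drawing leaves every $\pi_e$ unchanged, so it gives no reduction at all.

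The only sound symmetries here are automorphisms of the theta graph, with a row reversed exactly when the automorphism flips that edge's orientation.
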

\begin{proof}
    Using their algorithm, Fulek and Pach~\cite[Introduction and Section~4]{FulekPach2011} verified Conway's conjecture for bipartite graphs on at most $10$ vertices. For dumbbells of the form $DB(6,8,\ell)$,
\[
n=8+6+\ell-1 = 13+\ell,\qquad m=8+6+\ell=n+1.
\]
For $\ell=-5,-4,-3$, the corresponding values are $n=8,9,10$. These graphs are bipartite, so if any of them were thrackleable, there would be a bipartite graph with at most 10 vertices satisfying $m=n+1$, contradicting $m\le n$.
\end{proof}
\section{Computational Exclusion of \texorpdfstring{$DB(6,8,-2)$}{DB(6,8,-2)}}
To rule out the $DB(6,8,-2)$ dumbbell, we adapted Fulek and Pach's program to be more efficient specifically for this graph. To do this, we employed multiple strategies that all revolve around Lemma~\ref{lem:larger-system-exclusions}.
\begin{lemma}\label{lem:larger-system-exclusions}
    For a graph $G$, let $S\subset E(G)$ denote a subset of the original edges of $G$. Let $G_S$ be the graph $G$ with the edges outside $S$ removed, and let $\Pi_S$ be a complete ordered-crossing system for $G_S$. Now let $G'(G_S,\Pi_S)$ be the Fulek--Pach augmented graph, including open-crossing vertices, subdivided edges, and lock structures, formed from the edges of $G$ that belong to $S$ and the crossings among them. Finally, let $\operatorname{Ext}(\Pi_S)$ be the set of complete configurations $\Pi$ whose restriction to the edges in $S$ is exactly $\Pi_S$. If $G'(G_S,\Pi_S)$ is nonplanar, then $G'(G,\Pi)$ is nonplanar for all $\Pi\in\operatorname{Ext}(\Pi_S)$.
\end{lemma}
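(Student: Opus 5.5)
The plan is to show that $G'(G_S,\Pi_S)$ is (isomorphic to) a minor of $G'(G,\Pi)$ for every $\Pi\in\operatorname{Ext}(\Pi_S)$. The lemma then follows from the minor-closure fact recorded in the definition of a minor: a graph with a nonplanar minor is nonplanar. The argument parallels the one the paper already uses for $G'_{\mathrm{pre}}(G,\Pi'')$, where a partial augmented graph is a minor of every completion.

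\emph{Step 1: restriction is well defined.} Fix $\Pi\in\operatorname{Ext}(\Pi_S)$. For each $e\in S$, the row $\pi_e$ of $\Pi$ lists all edges of $E_e$, and deleting the entries outside $S$ leaves exactly the row of $e$ in $\Pi_S$. Hence along each edge $e\in S$, the crossings with other $S$-edges occur in the same relative order in $G'(G,\Pi)$ as in $G'(G_S,\Pi_S)$. Only the crossings of $e$ with edges outside $S$ are interleaved among them.

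\emph{Step 2: delete the non-$S$ material.} In $G'(G,\Pi)$, delete every subdivision vertex and every edge segment lying on an edge of $E(G)\setminus S$. Also delete the lock (diamond) edges at every crossing vertex that involves at least one non-$S$ edge. What remains is each $S$-edge, drawn as a path through its crossing vertices in the order of $\pi_e$, together with the full locks at $S$-$S$ crossings. Each $S$-edge path also still contains leftover degree-2 vertices at the former $S$/non-$S$ crossings.

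\emph{Step 3: contract leftovers.} Along each $S$-edge path, suppress every such degree-2 vertex by contracting one incident segment. I must then match the subdivision conventions. In $G'(G,\Pi)$, between two consecutive $S$-$S$ crossings there may now be several segments rather than the single extra subdivision vertex that $G'(G_S,\Pi_S)$ requires. Contracting down to exactly two segments yields the required path. Care is needed so that the lock corners survive: these are the diamond vertices adjacent to each $S$-$S$ crossing, and they must not be the vertices contracted away.

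I expect this bookkeeping to be the main obstacle. In $G'(G,\Pi)$, the vertex adjacent to an $S$-$S$ crossing along $e$ may be a non-$S$ crossing vertex rather than a subdivision vertex. Its diamond then attaches to the wrong vertex. My plan is to contract the segments between that non-$S$ crossing vertex and the corresponding diamond corner of the $S$-$S$ lock, working outward from the lock. I will have to check that each contraction identifies vertices along a single path only, which preserves planarity-relevant structure as a minor operation. I will also have to check that the resulting diamond is exactly the lock of $G'(G_S,\Pi_S)$. Once this is done, the result is isomorphic to $G'(G_S,\Pi_S)$, so it is a minor of $G'(G,\Pi)$, and nonplanarity transfers.
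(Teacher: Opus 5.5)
Your proposal is correct and follows the paper's own argument. You delete everything in $G'(G,\Pi)$ that belongs to edges outside $S$, including their locks, and contract the leftover subdivision paths along the retained $S$-edges. Because $\Pi$ restricts to $\Pi_S$, the $S$-$S$ crossing orders are unchanged, so $G'(G_S,\Pi_S)\preccurlyeq G'(G,\Pi)$ and nonplanarity transfers.

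The obstacle you anticipate does not arise. The construction always inserts a subdivision vertex between two consecutive crossing vertices, so a lock corner is never a crossing vertex. It therefore suffices to contract each retained path strictly between consecutive $S$-$S$ crossings to a single vertex, and similarly the path from an endpoint to its nearest $S$-$S$ crossing into that endpoint. This merges the surviving lock corners into exactly the corners used in $G'(G_S,\Pi_S)$.
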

\begin{proof}
    For an arbitrary $\Pi\in\operatorname{Ext}(\Pi_S)$, remove all edges outside the subset $S$ from $G$, along with all augmented structures defined in Fulek and Pach's algorithm that belong to original edges outside $S$, and contract the unnecessary subdivision paths left along the retained edges. The resulting graph is a minor because it is obtained from $G'(G,\Pi)$ by removing edges and contracting edges or paths, which are the key operations that define a minor. Additionally, because $\Pi\in\operatorname{Ext}(\Pi_S)$, deleting crossings involving edges outside $S$ leaves the crossing order of the edges in $S$ unchanged. Thus,
    \[
    G'(G_S,\Pi_S)\preccurlyeq G'(G,\Pi)
    \qquad\text{for every }\Pi\in\operatorname{Ext}(\Pi_S).
    \]
    By the definition of a minor given in the preliminaries, if $G'(G_S,\Pi_S)$ is nonplanar, then $G'(G,\Pi)$ is nonplanar for all $\Pi\in\operatorname{Ext}(\Pi_S)$.
\end{proof}
This differs from the chronological-prefix construction above: \(\Pi_S\) is complete on the selected graph \(G_S\), whereas \(\Pi''\) in \(G'_{\mathrm{pre}}(G,\Pi'')\) is incomplete and is tested using the fixed chronological-prefix construction. 

We incorporated this idea into our thrackleability-testing algorithm so that it constantly checks different choices of $S$. Completely testing each choice does not just eliminate one $\Pi$; it eliminates every $\Pi$ that can be made by extending $\Pi_S$, thus creating a speedup.

A key part of our algorithm that implements this lemma dictates how to choose $S$. We begin by defining the edges in the subset $S$ to be every edge in $C_6$. Under this choice, there are
\[
\prod_{e\in E(C_6)} 3! = (3!)^6 = 46,\!656
\]
permutations of $\Pi_S$ to check. Of these, eight permutations are planar when checked. We continue to check more cases using these eight cores as building blocks. This is done by adding one edge from the $C_8$ to one of the cores and checking every possible permutation. The new edge is tried in every position in the already established crossing order of each $C_6$ edge, and every crossing order for the new edge is also tried. With $x$ as a $C_8$ edge and $a$ through $f$ as the $C_6$ edges nonincident to $x$, an example, but not a full enumeration, looks as follows:
\begin{center}
\begin{tabular}{c|c}
\hline
Permutation & Possible orders \\
\hline
Permutation 1 $(\pi_a,\pi_x)$ &
$\pi_a=[x,c,d,e],\quad \pi_x=[a,b,c,d,e,f]$ \\[4pt]

Permutation 2 $(\pi_a,\pi_x)$ &
$\pi_a=[c,x,d,e],\quad \pi_x=[b,a,c,d,e,f]$ \\[4pt]

Permutation 3 $(\pi_a,\pi_x)$ &
$\pi_a=[c,d,x,e],\quad \pi_x=[c,a,b,d,e,f]$ \\[4pt]

Permutation 4 $(\pi_a,\pi_x)$ &
$\pi_a=[c,d,e,x],\quad \pi_x=[f,e,d,c,b,a]$ \\
\hline
\end{tabular}
\end{center}
When adding a new edge $x$, the program must decide two things to ensure that every possible ordered-crossing permutation including $x$ is tested. The first decision is where $x$ will be placed in the already established ordered-crossing sets $\pi_e$ of the previous nonincident edges. The second decision is the order of the nonincident crossings in $x$'s own ordered-crossing set $\pi_x$. In this way, the program cycles through every possible combination of the two. This ensures that no safe branch is missed. If the edges following $x$ are removed from all ordered-crossing sets $\pi_e$, and their own ordered-crossing sets are also removed, then any complete planar $\Pi$ produces one of the configurations involving $x$ that were searched here. Thus, every successful $\Pi$ must contain a tested configuration involving $x$. Because planarity cannot be regained by adding edges to a nonplanar configuration, pruning the branches following a nonplanar $x$ permutation removes only impossible branches.

Additionally, if $x$ were added after a different ordering of edges, removing those added edges would leave an already tested core-plus-$x$ system. Suppose that we add $x$ immediately after the core and obtain $z$ possible planar permutations. Now suppose that $x$ is added after a different edge, making $x$ the second added edge. If we remove that new first edge from all ordered-crossing sets and remove its own ordered-crossing set, as we did for the edges following $x$ earlier, we again obtain the core and $x$. The edge that preceded $x$ in this scenario can therefore be viewed as an extension of one of the core-plus-$x$ permutations, because adding its crossings back to recover the original permutation simply extends the core and $x$. Because planarity cannot be regained from nonplanarity, the resulting core-plus-$x$ system must be one of the $z$ successful orderings. This means that only one fixed edge-addition order is needed to cover all possible complete crossing-order systems extending the core.

Once a planar permutation for $x$ is checked, the process continues by adding a new $C_8$ edge. For permutations found to be nonplanar, all systems $\Pi$ that extend them are eliminated. If the branch resulting from a planar permutation of $x$ dies, then the program backtracks to continue enumerating the remaining permutations for $x$. The computer also helps organize later searches more efficiently through slot classes. A slot is essentially a position in any $\pi_e$ where $x$ could go, and a slot class groups slots that have similar local positions in the orderings relative to the $C_6$ crossings.

This means that, when a new edge is added, all possible crossing-order configurations for the selected edges are checked. If some are planar, then a new edge is added to each of those configurations and tested for planarity. If some of those crossing orders are planar, the same process is repeated for the next edge added to the surviving configurations. This looks like the branching tree shown in Figure~\ref{fig:endpoint-tree}.
\begin{figure}[H]
\centering

\resizebox{0.82\textwidth}{!}{%
\begin{tikzpicture}[
    >=Stealth,
    every node/.style={
        draw,
        rounded corners,
        align=center,
        minimum height=0.75cm,
        inner sep=4pt,
        font=\small
    },
    safe/.style={fill=green!12, draw=green!50!black},
    dead/.style={fill=red!15, draw=red!65!black},
    root/.style={fill=gray!10, draw=black}
]

% ------------------------------------------------
% Root
% ------------------------------------------------

\node[root] (core1) at (0,0)
{Core 1};

\node[safe] (q7) at (0,1.3)
{Core 1 $+q_7$};

% ------------------------------------------------
% Three q7 signatures
% ------------------------------------------------

\node[safe] (q71) at (-6,2.7)
{Core 1 $+q_{7,1}$};

\node[safe] (q72) at (0,2.7)
{Core 1 $+q_{7,2}$};

\node[safe] (q73) at (6,2.7)
{Core 1 $+q_{7,3}$};

% ------------------------------------------------
% Left branch
% ------------------------------------------------

\node[safe] (q71q6) at (-6,4.1)
{Core 1 $+q_{7,1}+q*$\\
(at least one planar)};

\node[safe] (q71q8) at (-6,5.5)
{Core 1 $+q_{7,1}+q_8$\\
(1 planar child)};

\node[dead] (q71q8q9) at (-6.6,6.9)
{Core 1 $+q_{7,1}+q_{8,1}+q_9$\\
(nonplanar)};

\node[dead] (kill71) at (-6.6,8.1)
{Kill Core 1 $+q_{7,1}$};

% ------------------------------------------------
% Middle branch
% ------------------------------------------------

\node[safe] (q72q6) at (0,4.1)
{Core 1 $+q_{7,2}+q*$\\
(at least one planar)};

\node[safe] (q72q8) at (0,5.5)
{Core 1 $+q_{7,2}+q_8$\\
(2 planar children)};

\node[dead] (q72q81q9) at (-2.1,6.9)
{Core 1 $+q_{7,2}+q_{8,1}+q_9$\\
(nonplanar)};

\node[dead] (q72q82q9) at (2.1,6.9)
{Core 1 $+q_{7,2}+q_{8,2}+q_9$\\
(nonplanar)};

\node[dead] (kill72) at (0,8.1)
{Kill Core 1 $+q_{7,2}$};

% ------------------------------------------------
% Right branch
% ------------------------------------------------

\node[dead] (q73q6) at (6,4.1)
{Core 1 $+q_{7,3}+q*$\\
(all nonplanar)};

\node[dead] (kill73) at (6,5.5)
{Kill Core 1 $+q_{7,3}$};

% ------------------------------------------------
% Arrows
% ------------------------------------------------

\draw[->] (core1) -- (q7);

\draw[->] (q7) -- (q71);
\draw[->] (q7) -- (q72);
\draw[->] (q7) -- (q73);

% Left branch
\draw[->] (q71) -- (q71q6);
\draw[->] (q71q6) -- (q71q8);
\draw[->] (q71q8) -- (q71q8q9);
\draw[->] (q71q8q9) -- (kill71);

% Middle branch
\draw[->] (q72) -- (q72q6);
\draw[->] (q72q6) -- (q72q8);
\draw[->] (q72q8) -- (q72q81q9);
\draw[->] (q72q8) -- (q72q82q9);
\draw[->] (q72q81q9) -- (kill72);
\draw[->] (q72q82q9) -- (kill72);

% Right branch
\draw[->] (q73) -- (q73q6);
\draw[->] (q73q6) -- (kill73);

\end{tikzpicture}%
}

\caption{Schematic illustration of the branching search and endpoint pruning.
Green nodes indicate states for which at least one planar selected
restriction survives. Red nodes indicate states whose continuations
are eliminated. Here, $q$ denotes an edge of $C_8$, and $q^*$ denotes a distinct later edge tested for lookahead.}

\label{fig:endpoint-tree}
\end{figure}
 A major problem here, however, is that the program could end up checking edges up to the last one in a branch, only to find that adding the final edge makes the branch nonplanar. A trick we implemented to reduce this problem in many cases is to test a strategically chosen later edge in a branch as the second edge before continuing with the intended branching order. By doing this, the branch is eliminated only if every compatible completed restriction involving the lookahead edge is nonplanar. If there is even one planar scenario, then the lookahead does not kill the branch, and the normal search continues in the originally intended edge order. This is illustrated in Figure~\ref{fig:endpoint-tree}.
 
 Another strategy we use to speed up this algorithm involves making sure that the crossing-order system we are viewing is compatible with the planar configurations of the $C_6$ and $C_8$. Suppose we are checking a given graph with an ordering $\Pi_i$. If we remove from this ordering the $C_8$ edges that are not part of the $C_6$ and maintain the order of the other edges, we obtain $\Pi_{C_6}$. Likewise, if we instead remove all the $C_6$ edges that are not part of the $C_8$ and maintain the order of the remaining edges, we obtain $\Pi_{C_8}$. In order for $\Pi_i$ to have any hope of eventually becoming a full planar configuration in $G'$, $\Pi_{C_6}$ must already be, or extend to, one of the eight planar $C_6$ cores, and $\Pi_{C_8}$ must already be, or extend to, one of the 2,544 allowed planar $C_8$ crossing-order systems.

As for the number 2,544, the program begins with these $C_8$ configurations, which we previously generated. In the process of generating these 2,544 systems, the $C_8$ edges were labeled cyclically and given the same orientation. We generated 2,544 planar systems using the Fulek--Pach crossing-order and planarity framework and then checked that the generated set was closed under rotations, reflections, and combinatorial adjacent-triple flips. These flips include all genuine Reidemeister III transitions, which can slide an edge past the open crossing of two other edges, thus changing the crossing order. Using these transformations, the 2,544 systems were divided into three connected groups of sizes
 \[
 656+896+992=2{,}544.
 \]
 Misereh and Nikolayevsky show that there are three equivalence classes of thrackle drawings of $C_8$ up to isotopy and Reidemeister III moves (isotopy means bending, stretching, and moving edges without removing crossings)~\cite[Section~4]{MiserehNikolayevsky2018Annular}. Because the checked flip relation includes every genuine Reidemeister III transition, the systems in one group cannot belong to the same equivalence class as the systems in the other groups. Therefore, because these 2,544 systems are planar, they are all of the systems relevant to our search. Outside these possibilities, there are also two nine-edge configurations for which we checked the number of planar possibilities before including them in our program. These two are denoted by
\[
H_2 = C_8 \cup \{e_2\},
\qquad
H_5 = C_8 \cup \{e_5\}.
\]
Each looks like a $C_8$ with an extra edge from the $C_6$ in $DB(6,8,-2)$ attached like a tail. Our preprocessing showed that $H_2$ has $60,196$ allowed planar crossing-order systems. Because $H_5$ is the reflected version of $H_2$, it also has $60,196$ allowed planar crossing-order systems, obtained by reflecting the $H_2$ systems. These provide additional possibilities, beyond just $C_6$ and $C_8$, that are checked in the same way.

\begin{figure}[H]
\centering

\begin{minipage}{0.48\textwidth}
\centering
\begin{tikzpicture}[
    scale=0.9,
    every node/.style={font=\small},
    vertex/.style={circle, fill=black, inner sep=1.8pt},
    blueedge/.style={blue, very thick},
    dimedge/.style={gray!60, very thick}
]

% Coordinates
\coordinate (A) at (0,1);      % shared top-left
\coordinate (B) at (0,0);      % middle shared
\coordinate (C) at (0,-1);     % shared bottom-left
\coordinate (D) at (-1.8,1.3); % left top
\coordinate (E) at (-2.3,0);   % left middle
\coordinate (F) at (-1.8,-1.3);% left bottom
\coordinate (H) at (3.2,2.0);
\coordinate (I) at (4.4,1.0);
\coordinate (J) at (4.4,-1.0);
\coordinate (K) at (3.2,-2.0);
\coordinate (L) at (1.2,-2.0);

% H2 blue edges
\draw[blueedge] (D)--(A) node[midway, above left] {$e_2$};
\draw[blueedge] (A)--(B) node[midway, right] {$e_1$};
\draw[blueedge] (B)--(C) node[midway, right] {$e_6$};

\draw[blueedge] (A)--(H) node[midway, above] {$q_7$};
\draw[blueedge] (H)--(I) node[midway, above right] {$q_8$};
\draw[blueedge] (I)--(J) node[midway, right] {$q_9$};
\draw[blueedge] (J)--(K) node[midway, below right] {$q_{10}$};
\draw[blueedge] (K)--(L) node[midway, below] {$q_{11}$};
\draw[blueedge] (L)--(C) node[midway, below left] {$q_{12}$};

% H2 dim edges
\draw[dimedge] (D)--(E) node[midway, left] {$e_3$};
\draw[dimedge] (E)--(F) node[midway, left] {$e_4$};
\draw[dimedge] (F)--(C) node[midway, below] {$e_5$};

% Vertices
\foreach \p in {A,B,C,D,E,F,H,I,J,K,L}
    \node[vertex] at (\p) {};

\end{tikzpicture}

\vspace{2mm}
\[
H_2 = C_8 \cup \{e_2\}
\]
\end{minipage}
\hfill
\begin{minipage}{0.48\textwidth}
\centering
\begin{tikzpicture}[
    scale=0.9,
    every node/.style={font=\small},
    vertex/.style={circle, fill=black, inner sep=1.8pt},
    blueedge/.style={blue, very thick},
    dimedge/.style={gray!60, very thick}
]

% Coordinates
\coordinate (A) at (0,1);      
\coordinate (B) at (0,0);      
\coordinate (C) at (0,-1);     
\coordinate (D) at (-1.8,1.3); 
\coordinate (E) at (-2.3,0);   
\coordinate (F) at (-1.8,-1.3);
\coordinate (H) at (3.2,2.0);
\coordinate (I) at (4.4,1.0);
\coordinate (J) at (4.4,-1.0);
\coordinate (K) at (3.2,-2.0);
\coordinate (L) at (1.2,-2.0);

% H5 blue edges
\draw[blueedge] (A)--(B) node[midway, right] {$e_1$};
\draw[blueedge] (B)--(C) node[midway, right] {$e_6$};
\draw[blueedge] (C)--(F) node[midway, below] {$e_5$};

\draw[blueedge] (A)--(H) node[midway, above] {$q_7$};
\draw[blueedge] (H)--(I) node[midway, above right] {$q_8$};
\draw[blueedge] (I)--(J) node[midway, right] {$q_9$};
\draw[blueedge] (J)--(K) node[midway, below right] {$q_{10}$};
\draw[blueedge] (K)--(L) node[midway, below] {$q_{11}$};
\draw[blueedge] (L)--(C) node[midway, below left] {$q_{12}$};

% H5 dim edges
\draw[dimedge] (D)--(A) node[midway, above left] {$e_2$};
\draw[dimedge] (D)--(E) node[midway, left] {$e_3$};
\draw[dimedge] (E)--(F) node[midway, left] {$e_4$};

% Vertices
\foreach \p in {A,B,C,D,E,F,H,I,J,K,L}
    \node[vertex] at (\p) {};

\end{tikzpicture}

\vspace{2mm}
\[
H_5 = C_8 \cup \{e_5\}
\]
\end{minipage}

\caption{The graphs $H_2$ and $H_5$. In each diagram, the blue edges indicate the chosen subgraph, with the $C_8$-only edges labeled $q_7,\dots,q_{12}$ and the $C_6$ edges labeled $e_1,\dots,e_6$. The labels \(e_1,\ldots,e_6\) and \(q_7,\ldots,q_{12}\) in this figure are fixed labels for the \(DB(6,8,-2)\) computation and are separate from the local edge notation used later in Lemma~\ref{lem:four-half-edges}.}
\label{fig:H2H5}
\end{figure}

In order to make the application of this feasible, we use one more strategy. Suppose that $P_8(\Pi_i)$ denotes the set of allowed planar crossing-order systems of $C_8$ to which the current $\Pi_i$ can still extend. Similarly, $P_{H_2}(\Pi_i)$ is the set of remaining $H_2$ systems, and $P_{H_5}(\Pi_i)$ is the set of remaining $H_5$ systems. Instead of checking $\Pi_i$ against every possible $C_8$, $H_2$, and $H_5$ system each time a new edge is added, we check it only against these sets. After $\Pi_i$ is checked, the sets are updated to contain only the orders in the original sets to which $\Pi_i$ could extend. Those new stored sets are then checked against $\Pi_{i+1}$, after which they are updated to store the systems compatible with $\Pi_{i+1}$, and so on. If any set of remaining systems becomes empty, then the current branch cannot extend to a complete planar $\Pi$ and can be eliminated. However, when that branch is eliminated and the program backtracks, the sets of still-possible $C_8$, $H_2$, and $H_5$ systems reset to their previous states at the partial system to which the program backtracked. Additionally, bitsets are used: instead of storing full systems, each set stores ones and zeros that map to pre-stored systems, where ones represent systems to which an extension is still possible and zeros represent systems to which it is not~\cite{ScoppettoThrackleLanguagesv110}.

One final major speedup comes from reflections. A reflection of the $DB(6,8,-2)$ graph pairs the $C_6$ cores
\[
\{0,2\},\quad\{1,4\},\quad\{3,6\},\quad\{5,7\}.
\]
Under this reflection, edge labels are reflected, and crossing-order rows are reversed when the orientation of the corresponding edge is reversed. Here, the orientation is the direction in which edges are traced. This reflection also results in a bijection between the sets of complete extended crossing orders of the paired graphs. Because the graphs are isomorphic, planarity is preserved between the bijected graphs. Additionally, this reflection preserves the applicable $C_8$, $H_2$, and $H_5$ restrictions. This means that searching one core in each pair is sufficient to determine whether the other core admits a planar complete extension; thus, only four out of eight cores need to be checked. Each checked core therefore represents two cores and has a weight of two at the end.

Let $T=(8!)^6(9!)^6$ be the total number of possible complete ordered-crossing systems $\Pi$, and let
\[
U=\frac{T}{46{,}656},
\]
which represents the number of possible complete systems that extend one $C_6$ core. Of the 46,656 $C_6$ cores, 46,648 are excluded as nonplanar by the program. Finally, let $R_i$ be the number of complete systems extending $C_6$ core $i$ that have been resolved by the program, for $i\in\{0,1,3,5\}$. For the program to conclude that the graph is non-thrackleable, it needs
\[
46{,}648U+2(R_0+R_1+R_3+R_5)=T.
\]
This takes the number of complete systems extending the excluded $C_6$ cores and adds the complete systems excluded from the planar $C_6$ cores, where the sum for the four representative cores is doubled to count all systems eliminated across the reflection pairs. Each of these sets is disjoint and contains unique complete systems restricting to a single $C_6$ core. If the program finds a planar complete $\Pi$, it terminates early and returns \textsc{Thrackleable}. If the program ends, is interrupted, or fails without either satisfying this equation or finding a planar complete $\Pi$, it returns \textsc{Inconclusive}. If the program cannot rule out a partial system, it continues searching extensions of that system rather than eliminating it or returning \textsc{Inconclusive}. Figure~\ref{alg:db682-search} summarizes the search structure, but detailed accounting and interruption handling are omitted.

In addition, this is a reconstruction of Fulek and Pach's program with a few adjustments. The program uses parallelism to check the four cores at the same time, and the planarity testing is performed by C++/Boost. The completed run and its accounting were separately audited after the successful run. Expressed as pseudocode, the program looks like the following:
\begin{figure}[!htbp]
\centering
\scriptsize

\begin{minipage}[t]{0.47\textwidth}
\begin{tabbing}
\quad\=\quad\=\quad\=\kill

$\textsc{SearchDB682}():$\\
\> $S \leftarrow E(C_6)$\\
\> $\mathcal{C} \leftarrow \varnothing$\\[1mm]

\> for $i \leftarrow 1$ to $46656$\\
\>\> $\Pi_{S,i} \leftarrow$ $i$th complete\\
\>\> crossing-order system on $S$\\
\>\> if $G'(C_6,\Pi_{S,i})$ is planar\\
\>\>\> $\mathcal{C} \leftarrow
\mathcal{C}\cup\{\Pi_{S,i}\}$\\[1mm]

\> $\mathcal{C} \leftarrow$ one representative\\
\> from each reflection pair\\[1mm]

\> for each core $P\in\mathcal{C}$ in parallel\\
\>\> $Q \leftarrow$ remaining $q$-edges\\
\>\> $\mathrm{result}_P\leftarrow
\textsc{Search}(P,Q,\mathrm{none})$\\
\> if any $\mathrm{result}_P$ contains a verified\\
\>\> planar complete system\\
\>\>\> return \textsc{Thrackleable}\\
\> if every core search finishes negative and\\
\>\> the accounting equation above holds\\
\>\>\> return \textsc{Nonthrackleable}\\

\> return \textsc{Inconclusive}

\end{tabbing}
\end{minipage}
\hspace{0.025\textwidth}
\begin{minipage}[t]{0.49\textwidth}
\begin{tabbing}
\hspace{0.8em}\=\hspace{0.8em}\=\hspace{0.8em}\=
\hspace{0.8em}\=\hspace{0.8em}\=\kill

$\textsc{Search}(P,Q,q_{\mathrm{current}}):$\\
\> if $q_{\mathrm{current}}$ has lookahead edge $q^*$\\
\>\> check every compatible completed\\
\>\> restriction involving $q^*$\\
\>\> if every such restriction is nonplanar\\
\>\>\> return \textsc{Dead}\\[1mm]

\> if $Q=\varnothing$\\
\>\> if $G'(DB(6,8,-2),P)$ is planar\\
\>\>\> independently verify the complete witness\\
\>\>\> if verification succeeds\\
\>\>\>\> return \textsc{Thrackleable}\\
\>\> return \textsc{Dead}\\[1mm]

\> $q\leftarrow$ next edge in $Q$\\

\> for each exact extension $\pi_q$ involving $q$\\
\>\> $P'\leftarrow P$ with this exact extension applied\\
\>\>\> to $q$ and all affected existing rows\\
\>\> during this construction, form complete selected\\
\>\>\> restrictions by omitting still-pending additional edges\\
\>\> check each resulting augmented graph for planarity\\
\>\> check extendability to $C_8,H_2,H_5$\\

\>\> if $P'$ passes all checks\\
\>\>\> $\mathrm{result}\leftarrow
\textsc{Search}(P',Q\setminus\{q\},q)$\\
\>\>\> if $\mathrm{result}=\textsc{Thrackleable}$\\
\>\>\>\> return \textsc{Thrackleable}\\
\>\>\> if $\mathrm{result}=\textsc{Inconclusive}$\\
\>\>\>\> return \textsc{Inconclusive}\\[1mm]

\> return \textsc{Dead}

\end{tabbing}
\end{minipage}

\caption{High-level recursive pseudocode for the adapted Fulek--Pach
search on $DB(6,8,-2)$.}
\label{alg:db682-search}
\end{figure}
\FloatBarrier
\begin{lemma}[Computer-assisted]
The dumbbell $DB(6,8,-2)$ is not thrackleable.
\end{lemma}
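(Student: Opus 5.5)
The plan is to reduce the statement to the exhaustive search \textsc{SearchDB682} and show that its negative outcome, together with the accounting equation, certifies non-thrackleability. The base fact, from the Fulek--Pach framework, is that $G=DB(6,8,-2)$ is thrackleable if and only if some complete crossing-order system $\Pi$ makes $G'(G,\Pi)$ planar. So it suffices to show that every one of the $T=(8!)^6(9!)^6$ complete systems yields a nonplanar augmented graph. I will partition these systems by their restriction $\Pi_S$ to $S=E(C_6)$. This restriction is well defined, and each $\Pi$ has exactly one, so the classes $\operatorname{Ext}(\Pi_S)$ are disjoint, each of size $U$, and together cover everything.

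\textbf{Step 1 (the 46,648 nonplanar cores).} For the $46{,}648$ cores $\Pi_S$ with $G'(C_6,\Pi_S)$ nonplanar, Lemma~\ref{lem:larger-system-exclusions} excludes the whole class $\operatorname{Ext}(\Pi_S)$ at once. This accounts for $46{,}648U$ systems.

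\textbf{Step 2 (reflection).} I will argue that the reflection automorphism of $DB(6,8,-2)$ acts as follows:
\begin{itemize}
\item it relabels edges and reverses the rows of edges whose orientation flips;
\item it maps complete systems bijectively to complete systems, carrying $G'(G,\Pi)$ to an isomorphic graph;
\item it sends the eight planar cores to each other in the pairs $\{0,2\},\{1,4\},\{3,6\},\{5,7\}$.
\end{itemize}
Hence it suffices to search cores $0,1,3,5$, and each resolved count $R_i$ is doubled.

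\textbf{Step 3 (correctness of the per-core search).} For each representative core I must show that every pruning rule discards only systems with nonplanar $G'$. There are three kinds of rules.
\begin{itemize}
\item \emph{Planarity pruning} on selected edge sets is justified by Lemma~\ref{lem:larger-system-exclusions} and minor-closure.
\item \emph{Fixed insertion order.} One fixed order of adding the $C_8$ edges suffices, because deleting later edges from any complete planar $\Pi$ yields a planar partial system that the search has enumerated. This is the hereditary argument given above.
\item \emph{Lookahead and the $C_8,H_2,H_5$ compatibility checks.} Each is valid because every planar complete $\Pi$ restricts to a planar system on these subgraphs, again by Lemma~\ref{lem:larger-system-exclusions} and heredity. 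For $C_8$ I also rely on the enumerated $2{,}544$ systems being exactly the planar ones. I would support this with the exhaustive generation itself, and cross-check it against the Misereh--Nikolayevsky classification into three classes. The $60{,}196$ systems for $H_2$, and their reflections for $H_5$, are justified the same way by exhaustive preprocessing.
\end{itemize}
The bitset bookkeeping, including restoring the sets on backtrack, is only an implementation of these set intersections.

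\textbf{Step 4 (conclusion).} The run terminates with every core search negative and no verified planar witness, and the audited counts satisfy
\[
46{,}648U+2(R_0+R_1+R_3+R_5)=T .
\]
Since the classes counted are disjoint, this equation shows that every complete $\Pi$ was excluded, so $DB(6,8,-2)$ is not thrackleable.

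\textbf{Main obstacle.} I expect the hardest step to be Step 3 combined with trust in the computation: proving that no pruning rule discards a viable branch, and that the $C_8$ and $H_2$ preprocessing lists are complete. I would address completeness by regenerating those lists independently with the plain Fulek--Pach enumeration. I would then confirm, by summing excluded systems leaf by leaf, that the counting identity is not merely asserted but reconstructed from the search tree.
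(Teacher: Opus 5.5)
Your proposal follows the paper's own route: the Fulek--Pach equivalence, partitioning the $T$ complete systems by their $C_6$ restriction, bulk exclusion of the $46{,}648$ nonplanar cores via Lemma~\ref{lem:larger-system-exclusions}, the reflection pairing of the eight planar cores, soundness of the per-core pruning (fixed addition order, lookahead, and $C_8/H_2/H_5$ compatibility), and the accounting identity $46{,}648U+2(R_0+R_1+R_3+R_5)=T$ certified by the completed run. One difference is worth noting: the prestored lists only need to contain every planar system, since extra entries would weaken pruning but not invalidate it, and you would ground that completeness in exhaustive generation with the Misereh--Nikolayevsky three-class count as a cross-check, whereas for $C_8$ the paper argues completeness mainly from flip-closure plus that classification.
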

\begin{proof}
In order for an abstract graph $G$ to be thrackleable, there must exist a complete $\Pi=\{\pi_e\mid e\in E(G)\}$ such that $G'$ is planar~\cite{FulekPach2011}. In order to certify non-thrackleability, $100.0\%$ of the possible configurations for $\Pi$ must be safely eliminated or completely tested, and none may give a planar $G'$. Our algorithm establishes the following:
\[
\text{all possible }\Pi
\longrightarrow
\begin{cases}
\text{safely pruned},\\
\text{or fully tested}.
\end{cases}
\]

After running our adapted algorithm, we obtain the following results:
 \begin{center}
\begin{tabular}{lr}
\hline
Result & \texttt{NONTHRACKLEABLE} \\
Resolved search space & $100\%$ \\
Total $C_6$ crossing-order systems & $46,\!656$ \\
Nonplanar $C_6$ systems excluded & $46,\!648$ \\
Planar $C_6$ cores retained & $8$ \\
Reflection representatives searched & $4$ \\
Recursive calls & $853,\!136,\!768$ \\
Complete selected-restriction graphs tested & $7,\!547,\!373$ \\
Main-search locked-prefix planarity tests & $386$ \\
Partial nonplanarity prunes & $0$ \\
Full $\Pi$ planarity tests & $0$ \\
Runtime & $9467.65$ seconds \\
\hline
\end{tabular}
\end{center}

The frozen program used to obtain these results was
\texttt{fulek\_pach\_thrackle\_v0150\_FINAL\_23ae90.py}, with the SHA-256 hash
\texttt{23ae908626820dd3afaf1dbbc180619186e8bfcb417bf8a2bbcac2bb76369632}. It was run with the command
\begin{Verbatim}[breaklines=true,breakanywhere=true]
python fulek_pach_thrackle_v0150_FINAL_23ae90.py search-db682-cores --processes 4 --planarity-backend boost --report db682_run.json
\end{Verbatim}
This table reports the original successful run, while a later successful run took $12,063.37$ seconds under the following conditions:
\begin{itemize}
    \item Windows 11
    \item Python 3.12.7
    \item NetworkX 3.3
    \item Boost 1.82
    \item MSVC 19.44 x64
    \item Four search workers
\end{itemize}
Because $100.0\%$ of the possible systems $\Pi$ were either safely checked or safely pruned, and no $\Pi$ resulted in a planar $G'$, the Fulek--Pach equivalence implies that \(DB(6,8,-2)\) is not thrackleable~\cite{ScoppettoDB682v0150}.
\end{proof}
\subsection{Comparison of V0.15.0 to Fulek--Pach Reconstruction}
In a controlled benchmark, both our independent reconstruction of the published Fulek–Pach method and v0.15.0 were initialized with the same $C_6$ core, and progress was checked over the 0--2, 2--5, 5--10, and 10--15 minute intervals for each core. Each of the eight runs was done sequentially with one worker and none finishing. Following the initial bulk exclusion by each program in the first two minutes, the observed search-space resolution-rate ratios across the four tested cores were approximately $1.74\times10^{24}$, $1.63\times10^{20}$, and $1.42\times10^{20}$ over the 2--5, 5--10, and 10--15 minute intervals, respectively. After 15 minutes, v0.15.0 had resolved an average of $99.9782\%$ of the extension space, compared with $23.0981\%$ for the reconstruction. These measurements demonstrate a substantially greater observed search-space resolution rate during the measured post-warmup intervals~\cite{ScoppettoSeededBenchmark2026}.
\section{Consequences of the Dumbbell Exclusion}
Using this exclusion of the $DB(6,8,-2)$ we are able to improve Xu's local bound around 8-faces. 
\begin{definition}
    A half-edge block is a maximal segment of consecutive half-edges belonging to the same 6-face.
\end{definition}
\begin{definition}
    An edge bridge occurs when a cycle shares a certain number of half-edges with another cycle and, after the two cycles separate, shares half-edges with it again in a separate block.
\end{definition}
\begin{definition}
    A vertex bridge occurs when a cycle shares a certain number of half-edges with another cycle and, after the two cycles separate, shares another vertex with it that is separate from the vertices already shared.
\end{definition}
For the following face arguments, let $G$ be a 2-connected, triangle-free, thrackleable graph with a cellular parity embedding in the projective plane when $G$ is nonbipartite and a plane embedding when it is bipartite. More specifically, a parity embedding is a special embedding in the projective plane in which all even cycles are two-sided: a thin neighborhood extending from a cycle behaves like an annulus (see Figure~\ref{fig:annulus-neighborhood}), so traveling once around one local side of the cycle returns to the same local side. All odd cycles are one-sided: instead of behaving like an annulus, a thin neighborhood extending from such a cycle behaves like a M\"obius strip, so traveling once around one local side of the cycle leads to what appears to be the opposite local side without crossing the cycle~\cite[Section~2, Corollary~1]{FulekPach2019}.
\begin{figure}[H]
\centering
\begin{tikzpicture}[>=Stealth, line cap=round, line join=round]

%------------------------------------------------
% Dimensions
%------------------------------------------------
\def\W{4.4}      % width of cylinder
\def\H{3.6}      % height of cylinder
\def\R{0.55}     % vertical radius of top/bottom ellipses

%------------------------------------------------
% Cylinder / annulus band
%------------------------------------------------
\fill[blue!20]
(-2.2,1.8) arc[start angle=180,end angle=360,x radius=2.2,y radius=\R]
-- (2.2,-1.8)
arc[start angle=0,end angle=180,x radius=2.2,y radius=\R]
-- cycle;

% Side boundaries
\draw[blue!55!black, thick] (-2.2,1.8) -- (-2.2,-1.8);
\draw[blue!55!black, thick] ( 2.2,1.8) -- ( 2.2,-1.8);

% Top ellipse
\draw[blue!55!black, thick, dashed] (-2.2,1.8) arc[start angle=180,end angle=360,x radius=2.2,y radius=\R];
\draw[blue!55!black, thick]        ( 2.2,1.8) arc[start angle=0,end angle=180,x radius=2.2,y radius=\R];

% Bottom ellipse
\draw[blue!55!black, thick]        (-2.2,-1.8) arc[start angle=180,end angle=360,x radius=2.2,y radius=\R];
\draw[blue!55!black, thick, dashed]( 2.2,-1.8) arc[start angle=0,end angle=180,x radius=2.2,y radius=\R];

% Middle cycle
\draw[blue!75!black, very thick]
(-2.2,0) arc[start angle=180,end angle=360,x radius=2.2,y radius=\R];
\draw[blue!75!black, very thick, dashed]
(2.2,0) arc[start angle=0,end angle=180,x radius=2.2,y radius=\R];

%------------------------------------------------
% Left labels (spread out more)
%------------------------------------------------
\node[align=right] (toplabel) at (-5.2,  1.95) {\small One side\\[-1mm]\small of the band};
\node[align=right] (midlabel) at (-5.2,  0.10) {\small The cycle\\[-1mm]\small (entire loop)};
\node[align=right] (botlabel) at (-5.2, -1.75) {\small The other side\\[-1mm]\small of the band};

\draw[->, thin] (toplabel.east) -- (-2.3, 1.45);
\draw[->, thin] (midlabel.east) -- (-2.3, 0.05);
\draw[->, thin] (botlabel.east) -- (-2.3,-1.45);

%------------------------------------------------
% Right label
%------------------------------------------------
\node[draw, fill=white, align=center] (rightlabel) at (4.6,0.15)
{\small Two distinct\\[-1mm]\small sides globally};
\draw[->, thin] (rightlabel.west) -- (2.35,0.15);

\end{tikzpicture}
\caption{Annulus neighborhood of a two-sided cycle.}
\label{fig:annulus-neighborhood}
\end{figure}
\begin{lemma}[Fulek--Pach~\cite{FulekPach2019}]
    In a thrackleable graph, two distinct 6-cycles cannot share a vertex.
\end{lemma}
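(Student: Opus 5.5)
The plan is to reduce the statement to dumbbell exclusions that are already on record, plus the small-case verification of Conway's conjecture for bipartite graphs. Let $C$ and $C'$ be distinct 6-cycles in a thrackleable graph $G$ that share at least one vertex, and put $H=C\cup C'$. Since thrackleability is hereditary, $H$ is thrackleable. $H$ is connected because the cycles meet. $H$ contains two distinct cycles, so its cyclomatic number is at least $2$, which gives $m(H)\ge n(H)+1$. Also $n(H)\le 6+6-1=11$, with equality exactly when $C\cap C'$ is a single vertex.

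\emph{Step 1: the single-vertex and single-path cases.} If $C\cap C'$ is a single vertex, then $H=DB(6,6,0)$, which is excluded by \cite[Lemma~6]{FulekPach2011}. Suppose instead that $C\cap C'$ is a single path with $k\ge1$ edges.
\begin{itemize}
\item For $k=1,2$, $H=DB(6,6,-k)$, excluded by \cite[Lemma~6]{FulekPach2011}.
\item For $k=3$, $H=DB(6,6,-3)$, excluded by \cite[Theorem~5.1]{LovaszPachSzegedy1997}.
\item For $k=4$, the two residual arcs each have length $2$ and join the same endpoints, so they form a $C_4$. This is non-thrackleable.
\item $k=5$ would force $C=C'$, which is impossible.
\end{itemize}

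\emph{Step 2: the bipartite general case.} Now suppose $C\cap C'$ has at least two components, or is a more complicated set than a single path. Then $n(H)\le 10$. If $H$ is bipartite, it is a bipartite graph on at most $10$ vertices with $m(H)\ge n(H)+1$. This contradicts the Fulek--Pach verification of Conway's conjecture for bipartite graphs on at most $10$ vertices, exactly as in the proof of the $DB(6,8,\ell)$ lemma.

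\emph{Step 3: the nonbipartite case, the expected obstacle.} It remains to handle $H$ nonbipartite, which can happen when an ear of $C'$ closes an odd cycle with an arc of $C$. Here I would pick a single ear $P$ of $C'$, with length $p\le 5$ and endpoints $x,y$ on $C$. This yields a theta subgraph $C\cup P$ whose three internally disjoint $x$--$y$ paths have lengths $a$, $6-a$ and $p$. Since $a$ and $6-a$ have the same parity, if $p$ has the other parity then $P$ closes an odd cycle with each arc.
\begin{itemize}
\item First I would rule out short cases directly: if the cycle $P\cup(\text{arc})$ has length $4$, this contradicts $C_4$ non-thrackleability.
\item Otherwise I would use the parity embedding of Xu's Lemma~1.1 in the projective plane. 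In that embedding both odd cycles of the theta are one-sided while $C$ is two-sided, and I would use this to show that two one-sided cycles sharing the path $P$ force a configuration already excluded.
\item Failing that, I would appeal to the Lov\'asz--Pach--Szegedy structure of thrackleable nonbipartite graphs.
\end{itemize}
This case analysis is where I expect the real work to lie. The counting argument of Steps 1--2 handles everything else cleanly.
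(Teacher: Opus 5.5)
\paragraph{Step 3 is not proved, and the proposed routes cannot close it.} The paper gives no argument for this lemma; it only cites Fulek--Pach 2019, so your proposal has to stand on its own. Steps 1 and 2 are sound reductions given the exclusions the paper quotes. Step 3, however, lists three strategies and carries out none of them, and the nonbipartite case really occurs. Take $C=c_0c_1c_2c_3c_4c_5c_0$ and $C'=c_0\,a\,c_3\,d\,e\,f\,c_0$. Then $H=C\cup C'$ has $10$ vertices and $12$ edges, and its cycles have lengths $5,5,6,6,7,7$, so your first bullet ($C_4$) does not apply.

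Your second bullet cannot work either. Draw $C$ as a contractible circle in the projective plane, and route the two ears $c_0ac_3$ and $c_3defc_0$ disjointly, each once through the crosscap. Then $C$ and $C'$ are two-sided and all four odd cycles are one-sided, so $H$ already has a parity embedding and sidedness forces nothing.

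Nor is anything already excluded sitting inside $H$. Suppose $C\cap C'$ has two components and $H$ is nonbipartite. Then the cycle space of $H$ is $3$-dimensional and its even part is exactly $\{0,C,C',C\triangle C'\}$. So every bipartite subgraph with $m\ge n+1$ contains $C\cup C'=H$. Hence neither the $DB(6,6,\ell)$ exclusions nor the $10$-vertex bipartite verification applies to any subgraph of $H$.

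\paragraph{What is missing.} You need a thrackle-specific exclusion of certain nonbipartite graphs, and it can be isolated cleanly. Call an ear $P$ of $C'$ (a subpath with both ends $x,y$ on $C$, no interior vertex on $C$, and no edge of $C$) \emph{wrong} if $|P|\not\equiv d_C(x,y)\pmod 2$. $H$ is nonbipartite exactly when some ear is wrong. Tracking the proper $2$-coloring of $C$ around the even cycle $C'$ shows that the number of wrong ears is even. Hence there are at least two, and a shortest one has length at most $3$.

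A wrong ear of length $1$, $2$ or $3$ joins vertices at $C$-distance $2$; $1$ or $3$; $2$, respectively. So $C\cup P$ is one of $DB(3,5,-1)$, $DB(3,6,-1)$, $DB(5,5,-2)$, $DB(5,6,-2)$, on $6$, $7$, $7$, $8$ vertices. Proving these four graphs non-thrackleable would complete Step 3, but none of your cited results does so:
\begin{itemize}
\item the verification quoted in the paper covers bipartite graphs only;
\item your third bullet names no specific result;
\item each of the four graphs also has a parity embedding ($C$ contractible, $P$ once through the crosscap), so projective-plane structure alone cannot exclude them.
\end{itemize}
One concrete option is to run a Fulek--Pach-type thrackleability test on these four small graphs.
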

\begin{lemma}[Fulek--Pach, Proposition~1~\cite{FulekPach2019}]
    For a 2-connected triangle-free thrackleable graph with a plane embedding in the bipartite case and a cellular parity embedding in the projective plane in the nonbipartite case, the boundary walk of an $8$-face is a $C_8$, the boundary walk of a 6-face is a $C_6$, and every face has degree at least six.
\end{lemma}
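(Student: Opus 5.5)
The plan is to argue purely combinatorially about facial walks, using only three facts: the faces are even, the graph is simple and triangle-free, and $C_4$ is forbidden. First, every face has even degree. In the bipartite case this holds because every closed walk in a bipartite graph has even length. In the nonbipartite case it is part of Xu, Lemma~1.1 (or the definition of a parity embedding, in which facial walks are contractible, hence two-sided, hence even). So it suffices to rule out faces of degree $2$ and $4$, and to show that facial walks of length $6$ and $8$ cannot repeat a vertex. The lower bound of six then follows at once, and a $6$- or $8$-face whose walk repeats no vertex is a $C_6$ or $C_8$.

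The central tool is a splitting argument. Suppose a facial closed walk $W$ of length $d\le 8$ passes through some vertex twice. Cut $W$ at that vertex into two shorter closed walks $W_1,W_2$ with $|W_1|+|W_2|=d$, and analyse each piece.
\begin{itemize}
\item A closed walk of length $\le 2$ either does not exist or is a back-and-forth traversal of a single edge, since $G$ is simple.
\item A closed walk of length $3$ or $4$ either contains $C_3$ or $C_4$, both excluded, or contains an immediate reversal $u v u$.
\item An odd closed walk of length $\le 5$ contains an odd cycle of length $\le 5$. With triangles excluded, this can only be a $C_5$.
\end{itemize}
Running through the splittings $d=2,4,6,8$ (with parts such as $2+2$, $3+3$, $2+4$, $4+4$, $3+5$, $2+6$, $1+\dots$), every case other than a genuine cycle is forced into one of two situations: a forbidden short cycle, or a walk that traverses some edge $e=uv$ twice on the same face. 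Degree $2$ and degree $4$ faces are disposed of directly this way. For example, a $4$-walk is either a $C_4$ or has the form $a\,b\,a\,c\,a$ or $a\,b\,c\,b\,a$. The same reduction applies if $W$ is a cycle with a chord-like repetition.

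The step I expect to be the main obstacle is excluding an edge that appears twice on one facial walk. In the plane this is easy: such an edge is a bridge, and a bridge contradicts $2$-connectivity (here $n\ge 3$ and there are no bridges). In the projective plane, a non-bridge edge can appear twice on one face, so more care is needed. My approach is to distinguish the two traversal directions.
\begin{itemize}
\item If $e$ is traversed twice in the \emph{same} direction, the subwalk of $W$ between the two traversals, closed up through $e$, is an orientation-reversing (one-sided) closed curve. By the parity property it must have odd length, so it yields an odd cycle. Combined with the length budget $d\le 8$ and the absence of triangles, this forces a $C_5$ together with at most three further walk edges. I would then show that these further edges either create a $C_4$ or $C_3$, or make the face walk one-sided, contradicting that faces are disks.
\item If $e$ is traversed in \emph{opposite} directions, then deleting $e$ separates the local rotation at $u$. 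Here I would argue that $\{u\}$ or $\{v\}$ is a cut vertex, using the fact that in a cellular embedding the two subwalks bound sides that meet only at $e$, again contradicting $2$-connectivity.
\end{itemize}
Once repeated edges are excluded, the splitting argument leaves only genuine cycles. This gives $d(f)\ge 6$ for every face, and shows that the boundary walks of $6$- and $8$-faces are $C_6$ and $C_8$ respectively.
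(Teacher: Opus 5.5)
The paper gives no argument of its own for this statement; it only cites Fulek--Pach. Your proposal therefore has to stand alone, and it has a gap exactly at the step you flag. The missing idea, once supplied, makes that step unnecessary.

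Your splitting analysis yields more than ``some edge is traversed twice on the face''. Every non-cycle piece of length $2$ or $4$ ($x\,a\,x$, $x\,a\,x\,b\,x$, $x\,a\,b\,a\,x$) contains a consecutive triple $u\,v\,u$ whose middle vertex is interior to the piece. So it is an \emph{immediate reversal of $W$ itself}.

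Immediate reversals are impossible for an elementary reason. A facial walk entering $v$ along $uv$ leaves along the neighbour of $uv$ in the rotation at $v$; in the projective case this is on the side dictated by the current local orientation. That is a different edge as soon as $d(v)\ge 2$, and it leads to a vertex other than $u$ because $G$ is simple. Finally, $2$-connectivity with $n\ge 3$ gives minimum degree at least $2$.

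With this the proof closes at once. If a vertex repeats on a facial walk of length $d\le 8$, cutting there gives closed walks of lengths $a+b=d$ with $a,b\ge 3$, since length $1$ would be a loop and length $2$ a reversal. So one piece has length $3$ or $4$, and a reversal-free closed walk of length $3$ or $4$ in a simple graph is a $C_3$ or $C_4$. Hence $6$- and $8$-faces are cycles, a $4$-face would be a $C_4$, and a $2$-face is itself a reversal $u\,v\,u$. Evenness rules out degrees $1$, $3$, $5$. Nothing topological is used beyond evenness of faces, and the argument works uniformly in the plane and the projective plane.

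The repeated-edge argument you propose instead is not a proof as written.
\begin{itemize}
\item In the opposite-direction case, the property you invoke fails for cellular embeddings in general. A subdivided theta graph has a one-face embedding on the torus in which every edge occurs twice, in opposite directions, and there is no bridge. The claim holds here only for a specific reason: a simple closed curve through the face that crosses $e$ once is two-sided, and every two-sided simple closed curve in the projective plane bounds a disk, which makes $e$ a bridge.
\item The same-direction case, which you leave at ``I would then show'', does occur in general. A one-sided $C_5$ in the projective plane satisfies all the hypotheses and has a single $10$-face traversing each edge twice in the same direction, so the restriction to lengths $6$ and $8$ is essential. For $d\le 8$, however, this case is simply absorbed by the splitting at the repeated vertex once reversals are excluded.
\end{itemize}
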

\begin{lemma}\label{lem:four-half-edges}
    Every 8-face has at most four half-edges.
\end{lemma}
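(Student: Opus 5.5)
We argue by contradiction, building on Xu's Lemma~2.1. That lemma says an 8-face has at most five half-edges, so it suffices to rule out exactly five. Let $F$ be an 8-face whose boundary is a $C_8$ (by Fulek--Pach, Proposition~1) containing five half-edges. Each half-edge lies on a 6-face, whose boundary is a $C_6$.

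The first step uses the Fulek--Pach lemma that distinct 6-cycles cannot share a vertex. Two half-edges that share a vertex of $F$ must therefore belong to the same 6-face. Since five of the eight edges of $F$ are half-edges, the half-edges come in half-edge blocks. Consecutive half-edges lie in a common block, and different blocks are separated by non-half-edges and belong to distinct 6-faces, which must be vertex-disjoint.

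The second step enumerates the possible block structures: the ways to distribute five half-edges into blocks around the $C_8$. If two blocks belonged to distinct 6-faces, they would need a gap of at least one non-half-edge between them, since the 6-faces cannot share an endpoint. A 6-face sharing $k$ consecutive edges with the 8-face realises $DB(6,8,-k)$. Sharing $k\ge 3$ edges is already excluded by the Fulek--Pach lemma on $DB(6,8,\ell)$ for $-5\le\ell\le-3$. Sharing two is excluded by our computer-assisted lemma on $DB(6,8,-2)$. Thus every block has length one, and at most four vertex-disjoint single-edge blocks fit on $C_8$: each needs two vertices, and eight vertices allow at most four disjoint blocks. This gives at most four half-edges.

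The remaining subtlety, which I expect to be the main obstacle, is that a single 6-face might meet $F$ in several separate blocks (an edge bridge), or share an extra vertex (a vertex bridge). In that case the union of the 6-face and the 8-face is not a dumbbell with a single common path. To handle this, I would show that a 6-cycle meeting a $C_8$ along two disjoint paths creates a shorter cycle. Splitting the union at the two shared segments produces cycles whose lengths sum to $14$ minus twice the number of shared edges. By bipartite or parity considerations, one of these is a $C_4$ (impossible in a thrackle), a face of degree less than six, or a second 6-cycle sharing a vertex with the original 6-face. All of these are forbidden.

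Vertex bridges would be treated the same way. Since $G$ is triangle-free and faces have degree at least six, any additional contact produces a short cycle. In the projective case, the parity-embedding description (even cycles two-sided) would be used so that the face-boundary arguments still apply locally. Once bridges are excluded, each 6-face meets $F$ in exactly one block, and the counting above finishes the proof.
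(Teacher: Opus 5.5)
Your skeleton matches the paper's: reduce to $h(F)=5$, split the half-edges into blocks using vertex-disjointness of $6$-cycles, and kill $6$-faces that meet $F$ in a single block with the $DB(6,8,-k)$ exclusions. The gap is in the bridge step, which is where the real work lies.

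Your splitting argument is sound when the discs of the $6$-face $C$ and the $8$-face $F$ glue into an annulus. That is automatic in the plane. In the projective plane it holds whenever the gluing is untwisted: both boundary cycles are then two-sided, hence even in a parity embedding, and their lengths sum to $14-2s\le 10$, which forces a $C_4$. But in the nonbipartite case the two discs can also glue with a twist into a M\"obius band. Then $C\cup F$ has a single boundary cycle, of length $14-2s$. Every cycle of $C\cup F$ other than $C$, $F$ and that boundary runs once around the core, so it is one-sided and hence odd, and parity gives you nothing.

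Concretely, take $F=v_1v_2\cdots v_8$ and $C=v_1v_2v_3v_7v_6xv_1$ with $x\notin F$. Then $C$ meets $F$ in the blocks $v_1v_2v_3$ and $v_6v_7$. The remaining cycles of $C\cup F$ are $v_3v_4v_5v_6v_7v_3$, $v_1v_2v_3v_7v_8v_1$, $v_1xv_6v_7v_8v_1$, $v_1v_2v_3v_4v_5v_6xv_1$ and $v_3v_7v_8v_1xv_6v_5v_4v_3$, of lengths $5,5,5,7,8$. So there is no triangle, no $C_4$, no second $6$-cycle, and no small face is forced. Likewise $C=v_1v_2yv_5v_4xv_1$ (blocks $e_1$, $e_4$) gives lengths $5,5,7,7,10$. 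A vertex bridge such as $C=v_1v_2yv_5zwv_1$ gives $5,7,7,7$. So neither ``no bridges'' nor ``every block has length one'' follows from the facts you cite, and your final count of at most four disjoint single-edge blocks rests on both.

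What is missing is the hypothesis $h(F)=5$, which you announce but never use; the paper handles bridges only under it.

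\begin{itemize}
\item With three non-half-edges, the possible block patterns are $5$, $4+1$, $3+2$, $3+1+1$ and $2+2+1$.
\item A $6$-face owning blocks with $j$ half-edges has only $6-j$ edges left to join them. This excludes bridges owning all five half-edges.
\item In $3+1+1$ and $2+2+1$ every vertex of $F$ lies on a half-edge. So a bridge owning only $1+1$ or $2+1$ leaves a block of length $3$ or $2$ whose $6$-face meets $F$ exactly along it, giving an excluded $DB(6,8,-3)$ or $DB(6,8,-2)$.
\item The bridges $2+2$ and $3+1$ are forced to use two single connecting edges. They die by reusing an edge of $F$ (a sixth half-edge), by the $C_4$ $v_1v_2v_3v_4v_1$, or by the second $6$-cycle $v_1v_5v_4v_6v_7v_8v_1$ through vertices of $C$. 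That last one is exactly the twisted case with $s=4$.
\item Vertex bridges need two consecutive non-half-edges, so only $5$, $4+1$ and $3+2$ remain. There the single isolated vertex serves at most one block, and the other block is again an excluded dumbbell.
\end{itemize}

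In your twisted example, what actually saves the lemma is this kind of counting, not a short cycle: every edge of $F$ other than $e_4$ touches $V(C)$, so $h(F)\le 4$.
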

\begin{proof}
    In an 8-face with five half-edges, there are three non-half-edges, since
    \[
8 - 5 = 3.
\]
Because of this, the half-edges can be split into at most three half-edge blocks. Since those five half-edges are distributed among at most three half-edge blocks, the pigeonhole principle gives
\[
\left\lceil \frac{5}{3} \right\rceil = 2.
\]
Thus, at least one of the three half-edge blocks contains at least two half-edges in a row, giving $s\ge2$, where $s$ is the number of half-edges in a half-edge block. If each 6-face meets the 8-face only at the edges and vertices of one half-edge block, then, because each half-edge block can belong to only one 6-face, there is at least one $DB(6,8,\ell)$ in an 8-face with five half-edges, where
\[
2 \leq s \leq 5,\qquad \ell=-s,
\]
hence
\[
-5 \leq \ell \leq -2.
\]
Because all $DB(6,8,\ell)$ for these values of $\ell$ are non-thrackleable, an 8-face cannot have five half-edges if each 6-face meets the 8-face at only one half-edge block. It remains, however, to consider vertex bridges and edge bridges. The possible half-edge-block combinations are
 \[
5, 4+1, 3+2, 3+1+1, 2+2+1.
\]
Starting with edge bridges, we can first eliminate the pattern $5$, since an edge bridge requires two half-edge blocks, but this pattern has only one. We are left with the following eight possible edge bridges:
\[
\begin{array}{c|c}
\text{Global block pattern} & \text{Possible blocks owned by one } C_6 \\ \hline
4+1 & 4+1 \\
3+2 & 3+2 \\
3+1+1 & 3+1,\ 1+1,\ 3+1+1 \\
2+2+1 & 2+2,\ 2+1,\ 2+2+1\\
\end{array}
\]
Because a 6-face requires at least two edges to make an edge bridge and also uses some edges as half-edges, the number of edges in the 6-cycle left to construct the bridge after removing the half-edges must be at least two. We use the formula
\[
6-j \ge k,
\]
to represent this, where $j$ is the number of half-edges on the edge bridge and $k$ is the minimum number of edges in the 6-face required to connect the blocks in an edge bridge. Using this, we can eliminate two groups of scenarios:
\[
4+1,\ 3+2:\qquad j=5,\quad k=2,\quad 1<2,
\]

and

\[
3+1+1,\ 2+2+1:\qquad j=5,\quad k=3,\quad 1<3,
\]
leaving the following possible configurations:
\[
3+1, 1+1, 2+2, 2+1.
\]
We immediately eliminate $1+1$, since it occurs in the $3+1+1$ pattern. This means there is an extra block of three half-edges with its own 6-face that cannot form an edge bridge to another half-edge block, since the other blocks are already part of the $1+1$ bridge. Additionally, the 6-face belonging to the block of three half-edges cannot form a vertex bridge. In a $3+1+1$ pattern, the five half-edges and three non-half-edge separators ensure that every vertex already has a 6-face attached. Because two distinct $C_6$ cycles cannot share a vertex, there must therefore be a non-thrackleable $DB(6,8,-3)$.

We can also eliminate $2+1$, since it occurs in the $2+2+1$ pattern. This means there is an extra block of two half-edges with its own 6-face that cannot form an edge bridge to another half-edge block, since the other blocks are already part of the $2+1$ bridge. For the same reason as in the $3+1+1$ case, the remaining block of two half-edges cannot form a vertex bridge. Because the vertices of two distinct $C_6$ cycles cannot meet, this would require a non-thrackleable $DB(6,8,-2)$. Now, all we need to check are
\[
3+1 \text{ and } 2+2.
\]
Starting with $2+2$, we assign the vertices of the 8-face as
\[
F=v_1v_2v_3v_4v_5v_6v_7v_8,
\]
where
\[
e_i=v_iv_{i+1}\qquad (1\leq i\leq 7),
\qquad
e_8=v_8v_1.
\]
According to the $2+2+1$ configuration, $\{e_1,e_2\}$ are the first two half-edges, $\{e_4,e_5\}$ are the next two half-edges, and $\{e_7\}$ is the last half-edge. To make the $2+2$ edge bridge, we use $\{e_1,e_2\}$ and $\{e_4,e_5\}$ as the half-edges. Connecting them requires at least two edges, yielding the following two ways to connect the two half-edge blocks:
\[
\{v_1v_4,v_3v_6\} \text{ and }  \{v_1v_6,v_3v_4\}.
\]
We can first eliminate the second possibility because $v_3v_4=e_3$ is an edge of the 8-face. This would make another half-edge, creating a sixth half-edge and contradicting the assumption $h(F)=5$. Next, we can eliminate the first possibility because it creates $v_1v_2v_3v_4v_1$, which is a non-thrackleable $C_4$.

Next, we eliminate the edge-bridge possibilities for $3+1$ using the same vertex assignments as in the $2+2$ proof. According to the $3+1+1$ configuration, $\{e_1,e_2,e_3\}$ are the first three half-edges, $\{e_5\}$ is the next half-edge, and $\{e_7\}$ is the last half-edge. To make the $3+1$ edge bridge, we use $\{e_1,e_2,e_3\}$ and $\{e_5\}$ as the half-edges. Connecting them requires at least two edges, yielding the following two ways to connect the two half-edge blocks:
\[
\{v_1v_5,v_4v_6\} \text{ and }  \{v_1v_6,v_4v_5\}.
\]
We can first eliminate the second possibility because it contains $v_4v_5=e_4$, an edge of the 8-face. This would make another half-edge, creating a sixth half-edge and contradicting the assumption $h(F)=5$. Next, we can eliminate the first possibility because it creates the $C_6$ $v_1v_5v_4v_6v_7v_8v_1$, which shares a vertex with the $C_6$ $v_1v_2v_3v_4v_6v_5v_1$. This is non-thrackleable because two distinct 6-cycles cannot share a vertex.

Finally, we consider vertex bridges. The only configurations that could have a vertex bridge are those with at least two non-half-edges in a row. This is because a vertex bridge cannot meet a vertex that is incident to a half-edge, as this would result in two 6-faces sharing a vertex. Thus, the vertex bridge must connect to an isolated vertex, meaning a vertex on $C_8$ incident to no half-edges. Since the $2+2+1$ and $3+1+1$ patterns do not have at least two consecutive non-half-edges, this leaves only
\[
3+2, 4+1, 5
\]
and within these patterns, the possible blocks forming the base of a vertex bridge are
\[
\begin{array}{c|c}
\text{Global block pattern} & \text{Possible vertex-bridge block} \\ \hline
5 & 5 \\
4+1 & 4 \text{ or } 1 \\
3+2 & 3 \text{ or } 2
\end{array}
\]
We exclude the pattern $5$ immediately, because five half-edges already contain six vertices, and no seventh vertex can belong to the same 6-face. Additionally, an internal vertex of a half-edge block is already incident to edges of the $C_6$ that owns the block, so the remaining edges of that $C_6$ must leave from the endpoints of the block.

Starting with $4+1$ and using the same 8-face vertex setup as before, the first four half-edges are $\{e_1,e_2,e_3,e_4\}$, and the next half-edge is $\{e_6\}$. We begin by making the block of four half-edges part of our vertex-bridging 6-face, which means that the bridge already contains $\{v_1,v_2,v_3,v_4,v_5\}$. It must now use its two remaining edges to connect the endpoints of this path to a vertex $x$ on the 8-face, giving
\[
\{v_5x,xv_1\}.
\]
Because distinct $C_6$ cycles cannot share vertices, and the boundary vertices of the other half-edge block are $v_6$ and $v_7$, the vertex to which we are bridging cannot be either of these. Otherwise, two 6-cycles would share a vertex. This leaves only $v_8$, so $x=v_8$, resulting in
\[
\{v_5v_8,v_8v_1\}.
\]
However, $v_8v_1=e_8$ is an edge of the 8-face, thereby adding an extra half-edge and violating our assumption that the graph has only five half-edges. If, on the other hand, we use the one-half-edge block in our vertex bridge instead of the four-half-edge block, it cannot bridge to a vertex on the four-half-edge block because two distinct $C_6$ cycles would then share a vertex. There is also only one isolated vertex, so the 6-face on the four-half-edge block cannot form a bridge if the one-half-edge block forms a vertex bridge. The graph is therefore guaranteed to contain the non-thrackleable $DB(6,8,-4)$ on the remaining four-half-edge block.

Next, consider the $3+2$ configuration. If the three-half-edge block is included in the vertex bridge, then the 6-face on the two-half-edge block cannot form a bridge because there is only one isolated vertex and two distinct $C_6$ cycles cannot share a vertex. There is therefore automatically a non-thrackleable $DB(6,8,-2)$ on the remaining two-half-edge block. If the two-half-edge block is included in the vertex bridge, then, for the same reason, the 6-face on the three-half-edge block cannot form a bridge. There is therefore automatically a non-thrackleable $DB(6,8,-3)$ on the remaining three-half-edge block. The preceding work gives
\[
h(F)\neq 5.
\]
Xu's Lemma~2.1~\cite{Xu2021} gives
\[
h(F)\leq 5.
\]
Therefore,
\[
h(F)\leq 5 \qquad \text{and} \qquad h(F)\neq 5
\quad\Longrightarrow\quad
h(F)\leq 4.
\]

\end{proof}
\section{Main Result}
\begin{figure}[H]
\centering
\begin{tikzpicture}[scale=0.85,
    every node/.style={font=\small},
    vertex/.style={circle, fill=black, inner sep=1.5pt}
]

% Central 8-face vertices
\coordinate (v1) at (0,2.4);
\coordinate (v2) at (1.7,1.7);
\coordinate (v3) at (2.4,0);
\coordinate (v4) at (1.7,-1.7);
\coordinate (v5) at (0,-2.4);
\coordinate (v6) at (-1.7,-1.7);
\coordinate (v7) at (-2.4,0);
\coordinate (v8) at (-1.7,1.7);

% Central 8-cycle
\draw[thick]
(v1)--(v2)--(v3)--(v4)--(v5)--(v6)--(v7)--(v8)--cycle;

% ---------- NE 6-face sharing v1v2 ----------
\coordinate (a1) at (3.156,2.822);
\coordinate (a2) at (2.912,4.644);
\coordinate (a3) at (1.212,5.344);
\coordinate (a4) at (-0.244,4.222);

\draw[thick]
(v1)--(v2)--(a1)--(a2)--(a3)--(a4)--cycle;

% ---------- SE 6-face sharing v3v4 ----------
\coordinate (b1) at (2.822,-3.156);
\coordinate (b2) at (4.644,-2.912);
\coordinate (b3) at (5.344,-1.212);
\coordinate (b4) at (4.222,0.244);

\draw[thick]
(v3)--(v4)--(b1)--(b2)--(b3)--(b4)--cycle;

% ---------- SW 6-face sharing v5v6 ----------
\coordinate (c1) at (-3.156,-2.822);
\coordinate (c2) at (-2.912,-4.644);
\coordinate (c3) at (-1.212,-5.344);
\coordinate (c4) at (0.244,-4.222);

\draw[thick]
(v5)--(v6)--(c1)--(c2)--(c3)--(c4)--cycle;

% ---------- NW 6-face sharing v7v8 ----------
\coordinate (d1) at (-2.822,3.156);
\coordinate (d2) at (-4.644,2.912);
\coordinate (d3) at (-5.344,1.212);
\coordinate (d4) at (-4.222,-0.244);

\draw[thick]
(v7)--(v8)--(d1)--(d2)--(d3)--(d4)--cycle;

% Vertex dots on central 8-face
\foreach \i in {1,...,8}
    \node[vertex] at (v\i) {};

% H/N labels
\node at (0.9,2.25) {};
\node at (2.2,0.9) {};
\node at (2.2,-0.9) {};
\node at (0.9,-2.25) {};
\node at (-0.9,-2.25) {};
\node at (-2.2,-0.9) {};
\node at (-2.2,0.9) {};
\node at (-0.9,2.25) {};

% Central face label
\node at (0,0) {};

\end{tikzpicture}
\caption{An $8$-face with alternating half-edges and non-half-edges,
giving the pattern $HNHNHNHN$.}
\end{figure}
\begin{proof}[\textbf{\upshape Proof of Theorem~\ref{thm:main-bound}}]
The formula $m\le\frac{18}{13}(n-1)$ fails in one case: the three-vertex triangle $C_3$. This triangle would require
\[
m\le\frac{36}{13},
\]
which is false. Importantly, for \(1\le n\le3\), since \(G\) is simple,
\[
m\le \binom{n}{2}\le n.
\]
The second bound cannot be extended to \(n=3\), since for \(C_3\),
\[
3>\frac{18}{13}(3-1)=\frac{36}{13}.
\]
Thus, we may assume \(n\ge4\).

\noindent\textit{Case 1: Connected Triangle-Free Case.}\\

For this proof, we adapt Xu's 2021 framework~\cite{Xu2021}. Let $G$ be a connected, triangle-free counterexample satisfying $m>\frac{18}{13}(n-1)$ and having the minimum possible number of vertices. Note that triangle-free graphs with $n\le3$ already satisfy our bound.

First, assume that a leaf exists in this graph, where a leaf is a vertex with only one edge attached to it. If we remove this leaf, we have a smaller triangle-free graph that satisfies our bound. Adding the leaf back adds one edge and one vertex, but our bound allows for an increase of $\frac{18}{13}>1$. Thus, a leaf cannot break the bound, and a minimal counterexample cannot have a leaf.

Next, assume that our graph contains a cut vertex $v$, where a cut vertex is a vertex whose removal disconnects the graph into more than one component. If we split our graph into these components with $v$ attached to each, every piece is smaller than our original graph and thus satisfies $m_i\le\frac{18}{13}(n_i-1)$. Summing these inequalities gives
\[
\sum_i m_i
\le \frac{18}{13}\sum_i(n_i-1).
\]
Using $\sum_i(n_i-1)=n-1$, we obtain
\[
m\le\frac{18}{13}(n-1),
\]
meaning that $G$ satisfies the bound, which contradicts the assumption that it is a counterexample. Because $G$ is connected and has no cut vertices, $G$ is 2-connected.

For $v\in V$, let $d(v)$ be its degree, where $V$ is the set of vertices in $G$, and let $\mathcal{F}$ be the set of faces in $G$. Define $t(v)=d(v)-r$ for all $v\in V$ and $t(u)=\frac{r-2}{2}d(u)-r$ for all $u\in\mathcal{F}$. Finally, for the following discharging calculation, let $c$ be a temporary coefficient and let $r=2c$.

\textit{Charge Setup.}
Since $G$ is 2-connected,
\[
\begin{aligned}
r+\sum_{v\in V} t(v)+\sum_{U\in \mathcal{F}} t(U)
&=
\left(2m-(n-1)r\right)
+\frac{r-2}{2}\cdot 2m-fr \\
&=
mr-nr-fr+r \\
&=
-(k-1)r,
\end{aligned}
\]

where
\[
f=|\mathcal{F}|,
\]
and
\[
k=
\begin{cases}
2, & \text{if } G \text{ is bipartite},\\
1, & \text{if } G \text{ is nonbipartite}.
\end{cases}
\]

\textit{Discharging Rule: Every $8^+$-face distributes an equal charge $x$ through each of its half-edges.}

\textit{Choosing the discharging parameters.}

A 6-face begins with a charge of
\[
t(6)=\frac{r-2}{2}(6)-r=2r-6.
\]
It receives charge $x$ through each of its six edges, and because the final charge must be nonnegative, we have
\[
t^*(6)=2r-6+6x \ge 0.
\]
This is because the whole graph cannot be a $C_6$, since
\[
6\le\frac{18}{13}(6-1)=\frac{90}{13},
\]
which is true, meaning that $C_6$ satisfies the bound and cannot be a counterexample. Additionally, suppose we pick an edge on a 6-face. There must be another face on the other side of that edge, and if it is a 6-face, there are two possibilities. The first is that the two faces share the same boundary, forming an inside 6-face and an outside 6-face. In this scenario, the whole graph is a $C_6$, which we already proved impossible. The other possibility is that the two 6-faces are distinct and share certain vertices, but this is also impossible because two distinct $C_6$ cycles cannot share a vertex. Thus, because every face has even length at least six and an adjacent 6-face is impossible, every 6-face must be adjacent only to $8^+$-faces. Therefore,
\[
x\ge1-\frac{r}{3}.
\]
The 8-face begins with a charge of
\[
t(8)=\frac{r-2}{2}(8)-r=3r-8.
\]
By Lemma~\ref{lem:four-half-edges}, an 8-face has at most four half-edges, so it distributes an equal amount of charge through at most four half-edges, giving
\[
t^*(8)\ge3r-8-4x \ge 0,
\]
which becomes
\[
\frac{3r-8}{4} \ge x.
\]
Substituting, we get
\[
\frac{3r-8}{4}\ge1-\frac{r}{3},
\]
simplifying to
\[
r\ge\frac{36}{13}.
\]
Finally, because $r=2c$, we substitute to get
\[
2c\ge\frac{36}{13},
\]
which simplifies to
\[
c\ge\frac{18}{13}.
\]
To obtain the strongest bound from this discharging rule, we take $c=\frac{18}{13}$. From this, we also know $x$ to be
\[
x=1-\frac{r}{3}=\frac{1}{13}.
\]
\textit{Discharging.}
Looking at the charges again, we see that $t(8)=3(2(\frac{18}{13}))-8=\frac{4}{13}$ and $t(6)=2(2(\frac{18}{13}))-6=-\frac{6}{13}$. Using the discharging rule, we see that for 6-faces,
\[
t^*(6)=t(6)+6(\frac{1}{13})=0.
\]
For 8-faces,
\[
t^*(8)\ge t(8)-4(\frac{1}{13})=0.
\]
For a $10^+$-face,
\[
\begin{aligned}
t^*(u)
&\ge t(u)-d(u)\left(\frac{1}{13}\right)\\
&=\frac{5}{13}d(u)-d(u)\left(\frac{1}{13}\right)-\frac{36}{13}.
\end{aligned}
\]
Since $d(u)\ge10$,
\[
t^*(u)\ge\frac{4}{13}>0.
\]
Thus, $\sum_{u\in\mathcal{F}}t^*(u)\ge0$, and $t(v)=t^*(v)$ for each $v\in V$. We also have
\[
r+\sum_{v\in V}t(v)=2m-(n-1)r>0.
\]
This implies that
\[
r+\sum_{v\in V} t^*(v)+\sum_{u\in\mathcal{F}} t^*(u)>0.
\]

\textit{Final contradiction.}

Notice that
\[
\begin{aligned}
0
&\ge -(k-1)r \\
&= r+\sum_{v\in V} t(v)+\sum_{u\in\mathcal{F}} t(u) \\
&= r+\sum_{v\in V} t^*(v)+\sum_{u\in\mathcal{F}} t^*(u) \\
&>0,
\end{aligned}
\]

which is impossible. This completes the triangle-free case.

\noindent\textit{Case 2: Connected Triangle-Containing Case.}\\

We start with a connected, bipartite, thrackleable graph $H$ that contains a cycle and is not equal to $C_6$. Build $H$ block by block, beginning with a cyclic block and attaching each new block $B$ at a cut vertex of $H$. A block is either a maximal 2-connected subgraph or a bridge $K_2$, which is just a single edge. In this scenario, let $N=|V(H)|$, $M=|E(H)|$, $v=|V(B)|$, and $e=|E(B)|$. Because $B$ is connected to the existing graph at one vertex, the increase in the number of vertices is
\[
v-1
\]
while the increase in the number of edges is the full amount $e$. Now define
\[
D(H)=\frac{18}{13}(N-2)-M
\]
to measure how far below the bound $H$ is, where a negative value of $D(H)$ means that $H$ is above the bound and a positive value means that it is below. This is derived from
\[
M\le\frac{18}{13}(N-2).
\]
The charge calculation from Case 1 can be applied to any 2-connected, bipartite, thrackleable graph other than $C_6$. Since the graph is bipartite, it uses $k=2$, so the total charge is $-r$. Since the total face charge is nonnegative, the charge identity gives
\[
2e-r(v-1)\le-r
\quad\Longrightarrow\quad
2e\le r(v-2),
\]
and because $r=\frac{36}{13}$,
\[
e\le\frac{18}{13}(v-2).
\]
This inequality applies because a cyclic block $B\neq C_6$ is 2-connected and, as a subgraph of $H$, is also bipartite and thrackleable. The change in $D(H)$ when a new block is attached is
\[
\Delta D=\frac{18}{13}(v-1)-e.
\]
Because $e\le\frac{18}{13}(v-2)$,
\[
\Delta D\ge\frac{18}{13}(v-1)-\frac{18}{13}(v-2)=\frac{18}{13}.
\]
Thus, $D(H)$ always increases by at least $\frac{18}{13}$. We must now check the two special cases, $K_2$ and $C_6$, which are both bipartite but fail this inequality. For these cases,
\[
K_2:\qquad
\Delta D
=
\frac{18}{13}(1)-1
=
\frac{5}{13},
\]

\[
C_6:\qquad
\Delta D
=
\frac{18}{13}(5)-6
=
\frac{12}{13}.
\]
Thus, they increase $D(H)$ by at least $\frac{5}{13}$. For the only problematic starting block, $C_6$,
\[
D(C_6)
=
\frac{18}{13}(6-2)-6
=
-\frac{6}{13}.
\]
Since \(H\) contains a cycle, choose a cyclic block as the initial block. Because \(H\neq C_6\), at least one additional block must be attached, and the change is known to be at least $\frac{5}{13}$. Therefore,
\[
-\frac{6}{13}+\frac{5}{13}
=
-\frac{1}{13},
\]
meaning that
\[
D(H)\ge-\frac{1}{13}.
\]
Substituting back, we get
\[
\frac{18}{13}(N-2)-M\ge-\frac{1}{13}
\quad\Longrightarrow\quad
M\le\frac{18}{13}(N-2)+\frac{1}{13}.
\]
Now, assume that a connected graph $G$ with $n\ge4$ contains a triangle. Conway's doubling replaces the $C_3$ with a $C_6$ while preserving how the rest of the graph is attached to it. This produces a graph $H$ that is connected, bipartite, and remains thrackleable by the Conway doubling lemma~\cite[Lemma~5]{FulekPach2011}. In $H$, the $C_3$ becomes a $C_6$, so
\[
N=n+3
\qquad\text{and}\qquad
M=m+3,
\]
where $N$ and $M$ are the numbers of vertices and edges in $H$, while $n$ and $m$ are the numbers of vertices and edges in $G$. Since $n\ge4$ in $G$, we have $N\ge4+3=7$, so $H\neq C_6$ and the inequality
\[
M\le\frac{18}{13}(N-2)+\frac{1}{13}
\]
applies. Substituting, we get
\[
\begin{aligned}
m+3
&\le\frac{18}{13}\bigl((n+3)-2\bigr)+\frac{1}{13}\\
&\Longrightarrow m\le\frac{18n+18+1-39}{13}\\
&\Longrightarrow m\le\frac{18n-20}{13}.
\end{aligned}
\]
Because
\[
\frac{18}{13}(n-1)=\frac{18n-18}{13},
\]
we have
\[
\frac{18n-20}{13}<\frac{18n-18}{13},
\]
giving us the desired bound $m\le\frac{18}{13}(n-1)$.

\noindent\textit{Case 3: Disconnected Case.}\\

Finally, consider the case in which $G$ is disconnected. Assume that the graph has at least two connected components, so $j\ge2$, and denote them by $G_1,\ldots,G_j$, where $m_i$ is the number of edges in $G_i$ and $n_i$ is the number of vertices. Every component other than $C_3$ satisfies
\[
m_i\le\frac{18}{13}(n_i-1).
\]
If a $C_3$ component is present, though, then using the same framework as earlier for the $C_6$ and $K_2$ cases, we see that it exceeds the componentwise bound by $\frac{3}{13}$. This means that, for our graph,
\[
\sum_{i=1}^{j}m_i
\le
\sum_{i=1}^{j}
\frac{18}{13}(n_i-1)
+
\frac{3t}{13}
\quad\Longrightarrow\quad
m\le\frac{18}{13}(n-j)+\frac{3t}{13},
\]
where $t$ is the number of components $G_i$ that are exactly $C_3$. We want the form $\frac{18}{13}(n-1)$ rather than $\frac{18}{13}(n-j)$. The difference between these two terms is
\[
\frac{18}{13}(n-1)-\frac{18}{13}(n-j)=\frac{18}{13}(j-1),
\]
meaning that, in the case including triangle components, we have $\frac{18}{13}(j-1)$ of excess available. We know that $t\le j$, because there cannot be more triangle components in our graph than the total number of components. Thus,
\[
3t\le3j\le18(j-1)
\quad\Longrightarrow\quad
\frac{3t}{13}\le\frac{18}{13}(j-1).
\]
This means that the triangles in the graph are not enough to consume all of the excess, giving
\[
m
\le
\frac{18}{13}(n-j)
+
\frac{18}{13}(j-1)
=
\frac{18}{13}(n-1).
\]
\end{proof}

\section{Future Work}

\subsection{Linear Algebra.}

We define $B_c$ as the $c\times m$ matrix over $\mathbb{F}_2$, or modulo 2, where $c$ is the number of open crossings and $m$ is the number of edges. In other words, the rows are the crossings in the graph, while the columns are the edges. A one is placed in each cell where the corresponding edge is part of the corresponding crossing, while zeros are placed in all other cells. Next, we define $B_v$ as the $v\times m$ matrix over $\mathbb{F}_2$, where $v$ is the number of vertices and $m$ is the number of edges. As in $B_c$, a one is placed in each cell where a vertex and an edge meet, with zeros elsewhere. Stack them as
\[
B=
\begin{pmatrix}
B_v\\
B_c
\end{pmatrix},
\]
so that every edge meeting is recorded in one matrix. For two distinct edges $e$ and $f$,
\[
(B^{\mathsf T}B)_{ef}=1.
\]
This means that, in this new $m\times m$ matrix over $\mathbb{F}_2$, every cell aside from the diagonal is guaranteed to be 1, since it represents the single meeting between two distinct edges. Thus,
\[
B^{\mathsf T}B=J-I+D,
\]
where $J$ is the matrix of all ones and $D$ is the diagonal matrix with $D_{ii}$ equal to the number of crossings on $e_i$ modulo 2. The number of crossings on $e_i=uv$ is $m+1-d(u)-d(v)$, since all edges other than $e_i$ and those incident to it must cross it. Therefore,
\[
D_{ii}\equiv m+1-d(u)-d(v)\pmod 2.
\]
The natural open direction to investigate from here is the ranks of these matrices, since
\[
\operatorname{rank}(J-I+D)
=
\operatorname{rank}(B^{\mathsf T}B)
\le
\operatorname{rank}(B).
\]
Thus, the question becomes: can the matrix $J-I+D$ force rank constraints that restrict how many edges a thrackle can have?

One limitation, however, is that this alone is not enough to determine whether a graph is thrackleable. Even a graph such as $C_4$ can be modeled in this way as though it were thrackleable when it is not. To make this method viable, one would need to add extra information, such as crossing order, to determine whether a graph described by the matrix is thrackleable.

\subsection{Dumbbell Exclusions.}

Dumbbells are extremely powerful constructs, and excluding them provides one of the most important ways to continue with this problem. One general way in which excluding dumbbells could improve the bound is by using Fulek and Pach's inequality. By excluding larger non-thrackleable dumbbells, one can increase the values of $c$ and $\ell$, thus creating a better bound, but only if all dumbbells in the required family of $c$ and $\ell$ values are excluded. Even more promising, though, is discharging. If $DB(6,8,-1)$ were also proved non-thrackleable, and if one could prove that excluding this dumbbell forces $h(F)=0$ for every 8-face $F$, the resulting discharging calculation could provide the hypothetical new bound
\[
\frac{15}{11}.
\]
Further dumbbell exclusions can improve the bound through discharging by allowing even stronger local restrictions to be proved. Another reason that checking the thrackleability of dumbbells is promising is that, if one is proved to be thrackleable, Conway's conjecture is disproved. This is because, for a crossing-free plane embedding of an abstract dumbbell, Euler's formula is
\[
n-m+f=2,
\]
which counts the contained faces and the outer face. If there are two contained faces, as in a dumbbell, then $f=3$, so
\[
n-m+3=2
\quad\Longrightarrow\quad
m=n+1.
\]
Since
\[
n+1>n
\]
by substitution,
\[
m>n
\]
so the conjecture would be disproved.

\subsection{Dumbbell Monotonicity.}

For every graph $G$, if a minor $H$ of it is nonplanar, then $G$ is nonplanar. Fulek and Pach use this principle to determine whether certain abstract graphs are thrackleable, based on a series of graph augmentations that produce an augmented graph $G'$, which is checked for planarity. The idea behind this open direction lies here. Consider specified dumbbells \(H\) and \(G\) such that \(H\preccurlyeq G\). Suppose that, for a small dumbbell $H$, every possible $G'(H,\Pi_H)$ for every complete $\Pi_H$ is tested and found to be nonplanar. Now, for the larger dumbbell $G$, the idea is that if one could prove that, for every complete \(\Pi_G\), the graph \(G'(G,\Pi_G)\) has at least one minor equal to some $G'(H,\Pi_H)$ of the smaller dumbbell, then the larger dumbbell could be proved non-thrackleable without checking every $G'(G,\Pi_G)$. This leaves the following open direction: prove that, for every complete $\Pi_G$, there exists a complete $\Pi_H$ such that $G'(H,\Pi_H)\preccurlyeq G'(G,\Pi_G)$.

\section*{Acknowledgments}

I would like to thank Hung Le for his mentorship throughout this project. He helped guide the direction of the investigation, suggested relevant questions and sources, and provided feedback on the developing work. This research is funded by the NSF CAREER award CCF-2237288 and the NSF grant CCF-2517033.

\section*{Declaration on Generative AI}

Generative AI, specifically ChatGPT 5.6 sol and ChatGPT 6 Astra, were used in a variety of areas while working on this paper. In particular, AI was used in the development of the algorithm, programming, proof checking, verification of the successful run, and accuracy checking of this paper. Starting with the algorithm development, ChatGPT was used to find the speedups of checking a strategically chosen later edge and reflecting the $C_6$ cores to reduce the search from eight cores to four, with both strategies verified by me and all other strategies created by me. For the programming, ChatGPT was used to implement our algorithm based on pseudocode I created, and for auditing the computation, while I also reviewed the program for errors. Aside from the program, I created the mathematical proof of Lemma~\ref{lem:four-half-edges}, with ChatGPT's help limited to advising me about cases omitted from my proof and, at times, suggesting possible directions for filling in those missing cases. For Theorem~\ref{thm:main-bound}, while I am fully responsible for Case 1, ChatGPT pointed out the other two cases I had missed and suggested possible directions, but I was the one who put the mathematics together to complete the proof. Finally, while ChatGPT was an important accuracy checker in the creation of this paper, I wrote the entire paper in my own words, aside from the figures, which I had ChatGPT create after explicitly describing exactly what I wanted in each figure, and a final grammatical check.

\printbibliography

\end{document}